\newtheorem{thm}{Theorem}
\newtheorem{lem}[thm]{Lemma}
\newtheorem{prop}[thm]{Proposition}
\newtheorem{definition}{Definition}
\newdefinition{rmk}{Remark}
\newproof{pf}{Proof}
\newproof{pot}{Proof of Theorem \ref{thm}}
\journal{XXX}
\begin{document}

\begin{frontmatter}

\title{A comparison theorem under sublinear expectations and related limit theorems}

\author[mymainaddress]{Ning Zhang}
\ead{nzhang@mail.sdu.edu.cn}

\author[mysecondaryaddress]{Yuting Lan\corref{mycorrespondingauthor}}
\cortext[mycorrespondingauthor]{Corresponding author}
\ead{lan.yuting@mail.shufe.edu.cn}

\address[mymainaddress]{School of Mathematics, Shandong University, Jinan 250100, China}
\address[mysecondaryaddress]{School of Statistics and Management, Shanghai University of Finance and Economics,\\
Shanghai 200433, China}

\begin{abstract}
In this paper, on the sublinear expectation space, we establish a comparison theorem between independent and convolutionary random vectors, which states that the partial sums of those two sequences of random vectors are identically distributed. Under the sublinear framework, through the comparison theorem, several fundamental limit theorems for convolutionary random vectors are obtained, including the law of large numbers, the central limit theorem and the law of iterated logarithm.
\end{abstract}

\begin{keyword}
Sublinear expectation\sep Convolution\sep Law of large numbers\sep Central limit theorem\sep Law of iterated logarithm
\end{keyword}

\end{frontmatter}

\section{Introduction}
The theory of nonlinear probabilities and expectations is frequently and broadly applied in various fields, such as risk measurement, nonlinear stochastic calculus and statistical uncertainty modeling, etc. The arising of the theory is mainly motivated by the fact that numerous uncertain phenomenon cannot be modeled or interpreted reasonably under the linear assumption, which is well illustrated by Denis and Martini\cite{DLMC}, Gilboa\cite{GI}, Chen and Epstein\cite{CZEL}, Peng\cite{PS97,PS99,PS08a}, etc.

Above works of nonlinear theory indicate that limit theorems, such as laws of large numbers and central limit theorems, play crucial roles in the interpretation, which motivates the investigation of limit theorems under nonlinear probabilities and expectations. De Cooman and Miranda\cite{CGME}, Epstein and Schneider\cite{ELSM}, Maccheroni and Marinacci\cite{MFMM} and Chen et al.\cite{CZWP} derive laws of large numbers for non-additive probabilities. Peng\cite{PS08b,PS09} introduces the general sublinear expectation and initiates the concept of independent (Peng independent) and identically distributed random vectors on the sublinear expectation space. Within the framework, Peng\cite{PS10} achieves following two limit results:
\begin{thm}[Weak law of large numbers]\label{PLLN}
Suppose $\{X_i\}_{i=1}^{\infty}$ is a sequence of independent and identically distributed random variables on the sublinear expectation space $(\Omega,\mathcal{H},\mathbb{E})$. Then for the partial sum $S_n=\sum_{i=1}^{n}X_i$ and any continuous function $\varphi$ with linear growth condition, there is
\[\lim\limits_{n\rightarrow\infty}\mathbb{E}[\varphi(\frac{1}{n}S_n)]=\mathbb{E}[\varphi(\eta)],\]
where $\eta$ follows the maximal distribution on $(\Omega,\mathcal{H},\mathbb{E})$ characterized by $G(p)=\mathbb{E}[\langle p,X_1\rangle]$.
\end{thm}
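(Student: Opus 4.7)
The plan is to follow Peng's PDE-based approach. One interprets $\mathbb{E}[\varphi(\eta)]$ as the time-$1$ value of the unique viscosity solution of a first-order Hamilton--Jacobi equation associated with $G$, and shows that the pre-limit quantity $\mathbb{E}[\varphi(S_n/n)]$ agrees with this value up to an error vanishing as $n\to\infty$, via a telescoping identity combined with Taylor expansion exploiting the Peng-independence property.

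The first step is a reduction to $\varphi \in C_b^{3}(\mathbb{R}^d)$: any continuous $\varphi$ of linear growth can be approximated uniformly on compacta by such functions, and the uniform integrability of $\{S_n/n\}_{n\ge 1}$ under $\mathbb{E}$ (implied by $\mathbb{E}[|X_1|]<\infty$) transfers the approximation to the relevant expectations uniformly in $n$. One then introduces
\begin{equation*}
u(t,x) := \mathbb{E}\bigl[\varphi(x+t\eta)\bigr], \qquad (t,x)\in[0,1]\times\mathbb{R}^d,
\end{equation*}
and checks, using the defining property of the maximal distribution, that $u$ is a bounded viscosity solution of $\partial_t u - G(Du)=0$ with $u(0,x)=\varphi(x)$, where $G(p)=\mathbb{E}[\langle p, X_1\rangle]$. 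Smoothness of $\varphi$ yields $u\in C^{1,2}_b$ with derivatives controlled in terms of $\varphi$ and the compact convex set $\Theta=\partial G(0)$ characterizing $\eta$.

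The core estimate is the telescoping identity
\begin{equation*}
u(1,0) - \mathbb{E}\bigl[\varphi(S_n/n)\bigr] = \sum_{k=0}^{n-1} \mathbb{E}\Bigl[u\bigl(1-\tfrac{k}{n},\tfrac{S_k}{n}\bigr) - u\bigl(1-\tfrac{k+1}{n},\tfrac{S_{k+1}}{n}\bigr)\Bigr].
\end{equation*}
Expanding each summand by Taylor's formula in both arguments produces a first-order piece $\tfrac{1}{n}\bigl(\partial_t u - \langle Du, X_{k+1}\rangle\bigr)$ evaluated at $(1-k/n,S_k/n)$, plus a remainder of order $n^{-2}(1+|X_{k+1}|^{2})$ after a mild truncation of $X_{k+1}$ if only first moments are available. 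Using Peng independence to replace $\mathbb{E}[\langle Du(\cdot,S_k/n), X_{k+1}\rangle]$ by $\mathbb{E}[G(Du(\cdot,S_k/n))]$ and then invoking the PDE to cancel the first-order contributions, one is left with a total residual of order $O(1/n)$.

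The main obstacle will be executing these steps rigorously inside a sublinear, non-additive expectation. One must consistently use the sub-additivity estimate $|\mathbb{E}[X]-\mathbb{E}[Y]|\le\mathbb{E}[|X-Y|]$ together with the tower-type identity
\begin{equation*}
\mathbb{E}\bigl[f(S_k, X_{k+1})\bigr] = \mathbb{E}\bigl[\psi(S_k)\bigr], \qquad \psi(x):=\mathbb{E}[f(x, X_{k+1})],
\end{equation*}
valid for $f\in C_b^{\mathrm{Lip}}(\mathbb{R}^{2d})$ by the definition of Peng independence, in order to legitimise the substitution of random arguments into sublinear expectations. A secondary subtlety, should $u$ fail to be as smooth as required, is a convolution regularisation $u_\delta = u\ast\rho_\delta$ followed by a vanishing-$\delta$ step at the end. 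Once these technicalities are in place, letting $n\to\infty$ in the telescoping bound delivers the claim.
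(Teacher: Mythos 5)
First, a point of orientation: the paper does not prove Theorem \ref{PLLN} at all. It is quoted verbatim from Peng \cite{PS10} as background and used as a black box in Section 4, so there is no in-paper proof to compare your argument against. Judged as a reconstruction of Peng's original argument, your outline identifies the right ingredients (a PDE characterization of the limit, a telescoping scheme with Taylor expansion, Peng independence to insert $G$), but two of your steps fail as written.

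The more serious one is the regularity claim that smoothness of $\varphi$ yields $u\in C^{1,2}_b$. For the first-order equation $\partial_t u-G(Du)=0$ the solution is $u(t,x)=\max_{v\in\Gamma}\varphi(x+tv)$, a pointwise maximum of smooth functions, and this is in general only Lipschitz: it develops corners where the maximizer jumps. Already for $d=1$, $\varphi=\sin$ and $\Gamma=[-1,1]$, the function $u(t,x)=\max_{|v|\le t}\sin(x+v)$ has $\partial_x u(t,{-\tfrac{\pi}{2}}^-)=-\sin t$ and $\partial_x u(t,{-\tfrac{\pi}{2}}^+)=\sin t$. Since your Taylor expansion is performed on $u$, not on $\varphi$, this is the central obstacle of the proof rather than a ``secondary subtlety'', and plain mollification $u_\delta=u\ast\rho_\delta$ does not repair it: because $G$ is convex, Jensen's inequality makes $u_\delta$ a smooth \emph{super}solution only, so you retain just one of the two inequalities your scheme needs. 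Peng's actual device is to perturb by an independent non-degenerate component so that the relevant equation becomes uniformly parabolic, invoke Krylov's interior $C^{1+\alpha/2,2+\alpha}$ estimates, and remove the perturbation at the end. Second, the displayed ``telescoping identity'' is false under a sublinear expectation: $\mathbb{E}$ is not additive, so $u(1,0)-\mathbb{E}[\varphi(S_n/n)]$ is not the sum of the expectations of the increments (indeed $\mathbb{E}[A]-\mathbb{E}[B]\neq\mathbb{E}[A-B]$ in general). The correct form is the chain of one-step estimates
\[
\bigl|\mathbb{E}[\varphi(S_n/n)]-u(1,0)\bigr|\le\sum_{k=0}^{n-1}\bigl|\mathbb{E}[u(t_{k+1},S_{k+1}/n)]-\mathbb{E}[u(t_{k},S_{k}/n)]\bigr|,
\]
each term controlled via the tower property and $|\mathbb{E}[X]-\mathbb{E}[Y]|\le\mathbb{E}[|X-Y|]$; you name these tools but build the argument on an identity that must be replaced by this inequality. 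A minor final point: uniform integrability of $\{S_n/n\}$ is not implied by $\mathbb{E}[|X_1|]<\infty$ alone in the sublinear setting; as the paper's remark after Theorem \ref{PCLT} records, the statement is used under $\mathbb{E}[|X_1|^2]<\infty$, which is what makes the extension from $C_{b,Lip}$ to linear-growth test functions legitimate.
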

\begin{thm}[Central limit theorem]\label{PCLT}
Under same assumptions of Theorem \ref{PLLN}, additionally assuming $\mathbb{E}[X_1]=\mathcal{E}[X_1]=0$, for any continuous function $\varphi$ with linear growth condition, there is
\[\lim\limits_{n\rightarrow\infty}\mathbb{E}[\varphi(\frac{1}{\sqrt n}S_n)]=\mathbb{E}[\varphi(X)],\]
where $X$ follows the $G$-normal distribution on $(\Omega,\mathcal{H},\mathbb{E})$ characterized by $G(A)=\frac{1}{2}\mathbb{E}[\langle AX_1,X_1\rangle]$.
\end{thm}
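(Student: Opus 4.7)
The plan is to prove Theorem \ref{PCLT} via the viscosity-solution / PDE approach pioneered by Peng. First I would introduce the candidate value function $u(t,x) := \mathbb{E}[\varphi(x + \sqrt{t}\, X)]$, where $X$ is the $G$-normal random vector characterised by $G(A) = \tfrac{1}{2}\mathbb{E}[\langle A X_1, X_1\rangle]$. By the general theory of the $G$-heat equation, $u$ is the unique viscosity solution of the fully nonlinear parabolic PDE $\partial_t u - G(D^2 u) = 0$ on $(0,\infty)\times\mathbb{R}^d$ with initial datum $u(0,x)=\varphi(x)$. The theorem is then equivalent to
\[
\mathbb{E}[\varphi(n^{-1/2} S_n)] \longrightarrow u(1,0) \quad \text{as } n \to \infty.
\]

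I would then discretise by setting $\delta := 1/n$ and studying the telescoping identity along the grid $t_k = k\delta$:
\[
\mathbb{E}[\varphi(n^{-1/2} S_n)] - u(1,0) \;=\; \sum_{k=0}^{n-1} \Bigl\{\mathbb{E}\bigl[u(1-t_{k+1}, n^{-1/2} S_{k+1})\bigr] - \mathbb{E}\bigl[u(1-t_k, n^{-1/2} S_k)\bigr]\Bigr\},
\]
where each one-step increment is handled by passing from $\mathbb{E}$ of a difference to a difference of $\mathbb{E}$'s via Peng's tower property under independence. For each summand, Taylor expanding $u$ around $(1-t_k, n^{-1/2} S_k)$ yields
\begin{align*}
u(1-t_{k+1},\, n^{-1/2}S_{k+1}) - u(1-t_k,\, n^{-1/2}S_k)
&= -\delta\, \partial_t u + n^{-1/2}\langle \nabla u,\, X_{k+1}\rangle \\
&\quad + \tfrac{1}{2n}\langle D^2 u\cdot X_{k+1},\, X_{k+1}\rangle + R_{n,k},
\end{align*}
with $R_{n,k}$ a third-order remainder. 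The Peng-independence of $X_{k+1}$ from $S_k$ together with $\mathbb{E}[X_1] = \mathcal{E}[X_1] = 0$ kills the linear term, while the quadratic term contributes, conditionally on $S_k$, exactly $\delta\, G(D^2 u)$; this cancels $\delta\,\partial_t u$ by the PDE. Summing and showing $\sum_k |R_{n,k}| = o(1)$ closes the argument.

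The main obstacles I anticipate are: (i) establishing enough regularity of $u$ to validate the Taylor expansion, namely uniform control of $\partial_t u$, $\nabla u$, $D^2 u$ together with Hölder bounds on $D^2 u$; because $G$ need not be uniformly elliptic, this is the heart of Peng's original argument and is handled by the standard perturbation $G_\varepsilon(A) := G(A) + \varepsilon\,\operatorname{tr}(A)$, applying Krylov-type interior estimates to the regularised problem and passing to the limit $\varepsilon \downarrow 0$; (ii) extending from smooth bounded $\varphi$ to merely continuous $\varphi$ with linear growth, which follows from mollification plus the uniform moment estimate $\sup_n \mathbb{E}[|n^{-1/2} S_n|^2] < \infty$ inherited from $\mathbb{E}[|X_1|^2] < \infty$; (iii) the conditional-expectation step must exploit Peng-independence in the correct order, since in the sublinear setting the tower property holds only when the later variable is independent of the earlier $\sigma$-field — which is exactly how the i.i.d.\ hypothesis of Theorem \ref{PCLT} is set up. The hardest step is (i), the regularity of the solution of the degenerate $G$-heat equation.
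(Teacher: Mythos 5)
This theorem is not proved in the paper at all: it is quoted verbatim from Peng's monograph (reference \cite{PS10}) as a known input, with only the remark that the proof requires $\mathbb{E}[|X_1|^{2+\delta}]<\infty$. Your outline is a faithful reconstruction of Peng's original argument --- the telescoping discretisation of the $G$-heat equation $\partial_t u - G(D^2u)=0$, cancellation of the first-order term via $\mathbb{E}[X_1]=\mathcal{E}[X_1]=0$, control of the remainder by the $(2+\delta)$-moment, and the uniformly elliptic perturbation $G_\varepsilon$ plus Krylov estimates for regularity --- so it is correct and coincides with the proof in the cited source.
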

\begin{rmk}
To be hold, Theorem \ref{PLLN} requires $\mathbb{E}[{|X_1|}^2]<\infty$ and Theorem \ref{PCLT} requires $\mathbb{E}[{|X_1|}^{2+\delta}]<\infty$ for some $\delta>0$, which are indicated in the proofs of Peng\cite{PS10}.
\end{rmk}
Chen and Hu\cite{CZHF} also derive a law of iterated logarithm on sublinear expectation spaces:
\begin{thm}\label{CHLIN}
Under same assumptions of Theorem \ref{PCLT}, together with the assumption that $\mathbb{E}[X_1^2]=\overline{\sigma}^2$ and $\mathcal{E}[X_1^2]=\underline{\sigma}^2$, denoting $a_n=\sqrt{2n\log\log n}$, there is
\[v(\underline{\sigma}\leq\limsup\limits_{n\rightarrow\infty}\frac{S_n}{a_n}\leq\overline{\sigma})=1\quad
and \quad
v(-\overline{\sigma}\leq\liminf\limits_{n\rightarrow\infty}\frac{S_n}{a_n}\leq-\underline{\sigma})=1.\]
\end{thm}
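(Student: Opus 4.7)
The plan is to mimic the classical Hartman–Wintner scheme for the law of the iterated logarithm, adapted to the capacity setting, and to prove the two inequalities
$$\limsup_{n\to\infty}\frac{S_n}{a_n}\leq\overline{\sigma}\quad\text{and}\quad\limsup_{n\to\infty}\frac{S_n}{a_n}\geq\underline{\sigma}\qquad v\text{-q.s.}$$
separately (the corresponding statements for the $\liminf$ will then follow by applying the first pair to $\{-X_i\}$, noting that $\mathbb{E}[-X_1]=\mathcal{E}[-X_1]=0$ and that the upper/lower variances simply swap roles under the reflection $X\mapsto -X$ only in sign conventions, while $\overline\sigma,\underline\sigma$ are unchanged since they are defined via $X_1^2$). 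A preliminary truncation $X_i^{(N)}=X_i\mathbf{1}_{\{|X_i|\le N\}}-\mathbb{E}[X_i\mathbf{1}_{\{|X_i|\le N\}}]$ (or a $\sqrt n$-type truncation in the $n$-th block) should be introduced at the start so that we can work with uniformly bounded summands; the tail contribution is controlled by the standing assumption $\mathbb{E}[X_1^2]=\overline\sigma^2<\infty$ together with a Chebyshev/capacity estimate.

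For the upper bound, I would establish a sublinear-expectation Bernstein-type exponential inequality of the form
$$\mathbb{V}\bigl(S_n\ge\lambda\bigr)\le\exp\!\left(-\frac{\lambda^2}{2n\overline\sigma^2+c\lambda N}\right)$$
for truncated summands bounded by $N$, by estimating $\mathbb{E}[\exp(tS_n)]$ through Peng independence and expanding $\mathbb{E}[\exp(tX_i)]\le\exp(t^2\overline\sigma^2/2+O(t^3N))$. Evaluating this at $\lambda=(1+\varepsilon)\overline\sigma a_n$ along the geometric subsequence $n_k=\lfloor\theta^k\rfloor$ with $\theta>1$ close to $1$ yields a summable capacity series, so the first Borel–Cantelli lemma for the sub-additive capacity $\mathbb{V}$ gives $\limsup_{k}S_{n_k}/a_{n_k}\le(1+\varepsilon)\overline\sigma$ quasi-surely; a standard Lévy-type maximal inequality bridges the gap between the subsequence and the full sequence, and letting $\varepsilon\downarrow0$, $\theta\downarrow1$ finishes the upper half.

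For the lower bound, I would work with the disjoint block increments $Y_k=S_{n_k}-S_{n_{k-1}}$ along a fast subsequence $n_k=\lfloor c^{k^{1+\delta}}\rfloor$ or $n_k=k^k$, so that $Y_k$ dominates $S_{n_{k-1}}$ in size and the $Y_k$ are Peng-independent. For each fixed $\varepsilon>0$, I would apply Theorem~\ref{PCLT} to compute, for large $k$,
$$v\!\left(Y_k\ge(1-\varepsilon)\underline\sigma\sqrt{2(n_k-n_{k-1})\log\log n_k}\right)\ge p_k,$$
with $\sum_k p_k=\infty$, using the explicit lower tail of the $G$-normal distribution and the relation $\underline\sigma^2=\mathcal{E}[X_1^2]$. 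The decisive step, and main obstacle, is then to invoke a second Borel–Cantelli lemma under the \emph{lower} capacity $v$ for Peng-independent events. Because $v$ is super-additive but not additive and Peng independence is asymmetric, the usual proof must be replaced by an argument based on $v(\bigcap_{k\le K}A_k^c)\le\prod_k(1-p_k)$, which follows by a backward induction using the independence structure $\mathbb{E}[f(X_1,\dots,X_{k-1})g(X_k)]=\mathbb{E}[f(X_1,\dots,X_{k-1})\mathbb{E}[g(X_k)]]$ applied to suitable indicator approximations.

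Once the block increments are lower-bounded infinitely often with $v$-full capacity, combining with the upper bound already proved (to control $|S_{n_{k-1}}|\le 2\overline\sigma a_{n_{k-1}}$) yields $\limsup_n S_n/a_n\ge(1-\varepsilon)\underline\sigma-o(1)$ quasi-surely, and letting $\varepsilon\downarrow0$ closes the argument. The routine portions are the exponential moment estimates and the truncation bookkeeping; the conceptually delicate points are the capacity-theoretic Borel–Cantelli lemmas, and especially the second one, which is where most of the sublinear-expectation subtlety is concentrated.
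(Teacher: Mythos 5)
First, be aware that the paper does not prove Theorem \ref{CHLIN} at all: it is imported as a known result of Chen and Hu \cite{CZHF}, and the only law of the iterated logarithm the paper proves itself (Theorem \ref{tm:3}) deliberately drops the inner bounds involving $\underline\sigma$ and establishes only $v(-\overline\sigma\le\liminf\le\limsup\le\overline\sigma)=1$. Your upper-bound half is essentially the standard scheme and agrees in outline with what the paper reproduces when proving Theorem \ref{tm:3}: a summable capacity series along a subsequence (the paper uses $n_k=[e^{k^\alpha}]$, $0<\alpha<1$, and obtains summability from Chebyshev's inequality plus the central limit theorem rather than a Bernstein bound), the first Borel--Cantelli lemma for $\mathbb{V}$ (Lemma \ref{lm:1}), and control of the in-between maxima. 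One caveat: a ``L\'evy-type maximal inequality'' is not available off the shelf under sublinear expectations (its classical proof uses medians and symmetry, which do not transfer); the paper instead bounds $\mathbb{E}[\max_{i\le n}|S_{m,i}|^r]$ via Hu's Rosenthal-type moment inequality and Stout's argument (Lemma \ref{lm:4}), and you would need to substitute something of that kind.

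The genuine gaps are in your lower-bound half. (i) The divergent series $\sum_k p_k$ cannot be produced by Theorem \ref{PCLT} alone: your thresholds $(1-\varepsilon)\underline\sigma\sqrt{2\log\log n_k}$ grow with $k$, whereas the CLT gives convergence only for each fixed bounded continuous test function; you need a lower tail estimate that is quantitative and uniform in $k$ (a Kolmogorov-type exponential lower bound or a Berry--Esseen rate), together with the nondegeneracy $\underline\sigma>0$ to know that the $G$-normal lower tail under $v$ decays no faster than $e^{-x^2/(2\underline\sigma^2)}$; the degenerate case $\underline\sigma=0$ must be handled separately. (ii) Your conditional Borel--Cantelli inequality is aimed at the wrong capacity: a bound on $v\bigl(\bigcap_{k\le K}A_k^c\bigr)$ only yields $\mathbb{V}\bigl(\bigcup_k A_k\bigr)=1$, which is far weaker than the quasi-sure statement you need. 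To conclude $v(A_k\ \text{i.o.})=1$ you must show $\mathbb{V}\bigl(\bigcap_{k\ge m}A_k^c\bigr)=0$ for every $m$, i.e.\ run the product/backward-induction argument for $\mathbb{E}$ (equivalently $\mathbb{V}$) to get $\mathbb{V}\bigl(\bigcap_{k\le K}A_k^c\bigr)\le\prod_{k\le K}(1-v(A_k))$, with the indicator approximations inside Peng independence arranged so the errors go the right way. These are precisely the points where the sublinear-expectation subtlety concentrates, and as written the proposal does not close them.
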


In this paper, we focus on the notion of convolution, which is a weaker condition than the Peng independence. The main motivation of this paper is to establish similar limit theorems for convolutionary random vectors. Without certain properties, it's difficult to prove those limit theorems directly under the convolutionary condition. Inspired by the comparison theorem between negatively associated and independent
random variables on linear probability spaces obtained by Shao\cite{SQ}, we establish a comparison theorem between convolutionary and Peng independent random vectors under sublinear expectations. Then we extend the weak law of large numbers, central limit theorem and law of iterated logarithm from Peng independent to convolutionary cases through our comparison theorem without applying any additional assumption.

This paper is organized as following: Initially, we present some notions and properties of sublinear expectations and capacities in Section 2. Then we state and prove our main result, the comparison theorem, in Section 3. Finally, we establish the weak law of large numbers, central limit theorem and law of iterated logarithm in Section 4 as applications of the comparison theorem.
\section{Preliminary}
In this section, we introduce some fundamental definitions and lemmas about capacities and sublinear expectations. Let $(\Omega,\mathcal{F})$ be a measurable space throughout the paper.
\begin{definition}
A set function $V(\cdot)$ defined on the $\sigma$-field $\mathcal{F}$ is called a capacity if it satisfies:
\begin{enumerate}
\item $V(\emptyset)=0$ and $V(\Omega)=1$.
\item $V(A)\leq V(B)$ if $A \subseteq B$ and $A,B\in\mathcal{F}$.
\end{enumerate}
\end{definition}
\begin{definition}\label{vc}
A capacity $V(\cdot)$ on the $\sigma$-field $\mathcal{F}$ is continuous if it satisfies:
\begin{enumerate}
\item \textbf{Continuous from below:} $V(A_n)\uparrow V(A)$, whenever $A_n\uparrow A$ and $A,A_n\in\mathcal{F}$.
\item \textbf{Continuous from above:} $V(A_n)\downarrow V(A)$, whenever $A_n\downarrow A$ and $A,A_n\in\mathcal{F}$.
\end{enumerate}
\end{definition}
In the sequel, let $\mathcal{H}$ be a set of random variables on $(\Omega,\mathcal{F})$ such that if $X_1,X_2,\cdots,X_n\in \mathcal{H}$, then $\varphi(X_1,X_2,\cdots,X_n) \in \mathcal{H}$ for each $\varphi \in C_{l,Lip}(\mathbb{R}^n)$, where $C_{l,Lip}(\mathbb{R}^n)$ denotes the set of local Lipschitz continuous functions, which is the linear space of functions $\varphi$ satisfying
\[|\varphi(x)-\varphi(y)|\leq C(1+|x|^m+|y|^m)|x-y|,\quad \forall \, x,y \in \mathbb{R}^n,some\,\, C>0, m\in N.\]
In this case, $X=(X_1,X_2,\cdots,X_n)$ is called an $n$-dimensional random vector, denoted as $X \in \mathcal{H}^n$.
\begin{definition}[Peng\cite{PS09}]
A sublinear expectation $\mathbb{E}$ on $\mathcal{H}$ is a functional $\mathbb{E}:\mathcal{H}\rightarrow\mathbb{R}$ satisfying:
\begin{enumerate}
\item \textbf{Monotonicity:}\quad $\mathbb{E}[X]\geq \mathbb{E}[Y]$, $\forall \,X,Y\in\mathcal{H},\,X\geq Y$.
\item \textbf{Constant preserving:}\quad $\mathbb{E}[c]=c$, $\forall \, c\in\mathbb{R}$.
\item \textbf{Sub-additivity:}\quad $\mathbb{E}[X+Y]\leq \mathbb{E}[X]+\mathbb{E}[Y]$, $\forall \, X,Y\in\mathcal{H}$.
\item \textbf{Positive homogeneity:}\quad $\mathbb{E}[\lambda X]=\lambda\mathbb{E}[X]$, $\forall \lambda\geq 0,\,X \in \mathcal{H}$.
\end{enumerate}
$(\Omega,\mathcal{H},\mathbb{E})$ is called a sublinear expectation space. The conjugate of $\mathbb{E}$ is defined by:
$\mathcal{E}[X]=-\mathbb{E}[-X]$.
\end{definition}
Peng\cite{PS09} also initiates the definition of identically distributed and independent random vectors under sublinear expectations.
\begin{definition}
Let $X_1$ and $X_2$ be two $d$-dimensional random vectors defined on two sublinear expectation spaces $(\Omega_1,\mathcal{H}_1,\mathbb{E}_1)$ and $(\Omega_2,\mathcal{H}_2,\mathbb{E}_2)$ respectively. Then the two vectors $X_1$ and $X_2$ are called identically distributed, denoted by $X_1 \overset{d}{=}X_2$, if for each function $\varphi\in C_{l,Lip}(\mathbb{R}^d)$, there is
\[\mathbb{E}_1[\varphi(X_1)]=\mathbb{E}_2[\varphi(X_2)].\]

\end{definition}
\begin{definition}
Suppose $X$ and $Y$ are $d_1$-dimensional and $d_2$-dimensional random vectors defined on the sublinear expectation space $(\Omega,\mathcal{H},\mathbb{E})$. The random vector $Y$ is said to be independent from the random vector $X$, if for each function $\varphi\in C_{l,Lip}(\mathbb{R}^{d_1+d_2})$, there is
\[\mathbb{E}[\varphi(X,Y)]=\mathbb{E}[\mathbb{E}[\varphi(x,Y)]_{x=X}].\]
$\{X_i\}_{i=1}^{\infty}$ is called an independent sequence if $X_{i+1}$ is independent from $(X_1,\cdots,X_i)$ for any $i \in \mathbb{N}^+$.
\end{definition}
\begin{rmk}\label{rm1}
If $Y$ is independent from $X$ satisfying $X\geq0$ and $\mathbb{E}[Y]\geq0$, then there is $\mathbb{E}[XY]=\mathbb{E}[X]\cdot\mathbb{E}[Y]$.
\end{rmk}
\begin{definition}[Peng\cite{PS09}]
The product space of two sublinear expectation spaces $(\Omega_1,\mathcal{H}_1,\mathbb{E}_1)$ and $(\Omega_2,\mathcal{H}_2,\mathbb{E}_2)$, denoted as
$(\Omega_1\times\Omega_2,\mathcal{H}_1\otimes\mathcal{H}_2,\mathbb{E}_1\otimes\mathbb{E}_2)$, is defined by
\[\mathcal{H}_1\otimes\mathcal{H}_2:=\{Z(\omega_1,\omega_2)=\varphi(X(\omega_1),Y(\omega_2)):\,\,(\omega_1,\omega_2)\in\Omega_1\times\Omega_2,(X,Y)\in\mathcal{H}_1^m\times\mathcal{H}_2^n,\,\,\varphi\in C_{l,Lip}(\mathbb{R}^{m+n})\}\]
and for random variables of the form $Z(\omega_1,\omega_2)=\varphi(X(\omega_1),Y(\omega_2))$, there is
\[\mathbb{E}_1\otimes\mathbb{E}_2[Z]:=\mathbb{E}_1[\mathbb{E}_2[\varphi(x,Y)]_{x=X}].\]
By the same method, we can define the product space $(\prod\limits_{i=1}^n\Omega_i,\bigotimes\limits_{i=1}^n\mathcal{H}_i,\bigotimes\limits_{i=1}^n\mathbb{E}_i)$ of given sublinear expectation spaces $(\Omega_i,\mathcal{H}_i,\mathbb{E}_i)$, $i=1,2,\cdots,n$. Furthermore, if the random variable $Z$ belongs to the product space $(\prod\limits_{i=1}^k\Omega_i,\bigotimes\limits_{i=1}^k\mathcal{H}_i,\bigotimes\limits_{i=1}^k\mathbb{E}_i)$ for some positive integer $k<\infty$, then $Z$ belongs to the product space $(\prod\limits_{i=1}^\infty\Omega_i,\bigotimes\limits_{i=1}^\infty\mathcal{H}_i,\bigotimes\limits_{i=1}^\infty\mathbb{E}_i)$ with
$\bigotimes\limits_{i=1}^\infty\mathbb{E}_i[Z]:=\bigotimes\limits_{i=1}^k\mathbb{E}_i[Z]$. It's easy to verify that the product spaces defined above are also sublinear expectation spaces.
\end{definition}
\begin{prop}\label{pr1}
Let $X_i$ be any $d_i$-dimensional random vector on sublinear expectation space $(\Omega_i,\mathcal{H}_i,\mathbb{E}_i)$ respectively, $i=1,2,\cdots,n$. Denote
\[Y_i(\omega_1,\omega_2,\cdots,\omega_n)=X_i(\omega_i),\quad i=1,2,\cdots,n.\]
Then $Y_i$ is a $d_i$-dimensional random vector on the sublinear expectation space $(\prod\limits_{i=1}^n\Omega_i,\bigotimes\limits_{i=1}^n\mathcal{H}_i,\bigotimes\limits_{i=1}^n\mathbb{E}_i)$. Moreover, $Y_i \overset {d}{=} X_i$ and $Y_{i+1}$ is independent from $(Y_1,\cdots,Y_i)$ for each $i=1,2,\cdots,n-1$.
\end{prop}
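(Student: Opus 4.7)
The plan is to verify the three claims in turn, each of them being essentially an unpacking of the defining formula for the product sublinear expectation. The only non-trivial observation is that, because $\varphi(Y_i(\omega_1,\dots,\omega_n))=\varphi(X_i(\omega_i))$ is a constant as a function of $\omega_j$ for every $j\ne i$, the iterated expectation collapses to a single $\mathbb{E}_i$ by constant preserving.

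First I would show $Y_i\in\bigotimes_{k=1}^n\mathcal{H}_k$. Since $Y_i(\omega_1,\dots,\omega_n)=\pi_i(X_1(\omega_1),\dots,X_n(\omega_n))$, where $\pi_i\in C_{l,Lip}(\mathbb{R}^{d_1+\cdots+d_n})$ is the projection onto the coordinates corresponding to $X_i$, this is immediate from the definition of the product space. To prove $Y_i\stackrel{d}{=}X_i$, I take $\varphi\in C_{l,Lip}(\mathbb{R}^{d_i})$ and compute
\[\bigotimes_{k=1}^{n}\mathbb{E}_k[\varphi(Y_i)]=\mathbb{E}_1\bigl[\cdots\mathbb{E}_n[\varphi(X_i(\omega_i))]\cdots\bigr],\]
where inside each $\mathbb{E}_j$ with $j\ne i$ the integrand is a constant in $\omega_j$; the constant preserving axiom then removes all those factors and leaves $\mathbb{E}_i[\varphi(X_i)]$.

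The independence claim is handled similarly. For $\psi\in C_{l,Lip}(\mathbb{R}^{d_1+\cdots+d_{i+1}})$, the function $\psi(Y_1,\dots,Y_{i+1})$ is constant in $\omega_{i+2},\dots,\omega_n$, so integrating those out trivially reduces $\bigotimes_{k=1}^{n}\mathbb{E}_k[\psi(Y_1,\dots,Y_{i+1})]$ to $\bigotimes_{k=1}^{i+1}\mathbb{E}_k[\psi(X_1,\dots,X_{i+1})]$. By the inductive definition of the product expectation, this equals
\[\bigotimes_{k=1}^{i}\mathbb{E}_k\!\left[\mathbb{E}_{i+1}[\psi(x_1,\dots,x_i,X_{i+1})]\big|_{(x_1,\dots,x_i)=(X_1,\dots,X_i)}\right],\]
and the inner expression $\mathbb{E}_{i+1}[\psi(x_1,\dots,x_i,X_{i+1})]$ coincides, again by the equidistribution argument above applied with a parameter, with $\bigl(\bigotimes_{k=1}^{n}\mathbb{E}_k\bigr)[\psi(x_1,\dots,x_i,Y_{i+1})]$. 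Re-wrapping the outer iterated expectation into $\bigotimes_{k=1}^{n}\mathbb{E}_k$ (again by constant preserving in the slots $j\ne i+1$, $j\le i$ unused) yields the defining identity for $Y_{i+1}$ being independent from $(Y_1,\dots,Y_i)$.

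The only real obstacle is bookkeeping: one must be careful that $C_{l,Lip}$ is stable under the operations used (projection and fixing variables as parameters), and that each reduction step genuinely uses either the inductive definition of $\bigotimes_k\mathbb{E}_k$ or the constant preserving axiom. No sublinear inequality is ever invoked, so the proof is purely structural.
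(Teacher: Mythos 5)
The paper states Proposition~\ref{pr1} without proof (it is taken as a standard fact from Peng's construction of product spaces), so there is no argument of the authors' to compare against; your proof is correct and is the standard one. The reductions via constant preserving and the inductive definition of $\bigotimes_k\mathbb{E}_k$ are all sound; the only point you gesture at rather than verify is that $y\mapsto\mathbb{E}_{i+1}[\psi(y,X_{i+1})]$ again lies in $C_{l,Lip}$, which holds because $\mathbb{E}_{i+1}[\,|X_{i+1}|^m\,]<\infty$ (as $|X_{i+1}|^m\in\mathcal{H}_{i+1}$ and $\mathbb{E}_{i+1}$ is real-valued), exactly as in the paper's proof of Theorem~\ref{tm:1} for the bounded Lipschitz case.
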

Peng\cite{PS10} further introduces the maximal and G-normal distribution on the sublinear expectation space.
\begin{definition}\label{rm:1}
A $d$-dimensional random vector $\eta$ on a sublinear expectation space $(\Omega,\mathcal{H},\mathbb{E})$ is maximal distributed if there exists a bounded, closed and convex subset $\Gamma\subset\mathbb{R}^d$ such that
\[\mathbb{E}[\varphi(\eta)]=\max\limits_{x\in\Gamma}\varphi(x),\quad \forall \varphi\in C_{l,Lip}(\mathbb{R}^{d}).\]
If $\eta$ is $1$-dimensional with $\mathbb{E}[\eta]=\overline{\mu}$ and $\mathcal{E}[\eta]=\underline{\mu}$, then we have $\Gamma=[\overline{\mu},\underline{\mu}]$.
\end{definition}

\begin{definition}
A $d$-dimensional random vector $X$ on a sublinear expectation space $(\Omega,\mathcal{H},\mathbb{E})$ is G-normal distributed if it satisfies
\[aX+b\hat{X}\overset{d}{=}\sqrt{a^2+b^2}X,\quad\forall\,a,b\geq 0,\]
where $\hat X\overset{d}{=}X$ and $\hat X$ is independent from $X$.
\end{definition}
The maximal and G-normal distribution can also be characterized by following:
\begin{lem}\label{lm:3}
Let $(X_i,\eta_i),\,i=1,2$ be two pairs of $d_1$-dimensional G-normal distributed and $d_2$-dimensional maximal distributed random vectors on sublinear expectation spaces $(\Omega_i,\mathcal{H}_i,\mathbb{E}_i),\,i=1,2$, respectively. Then
\begin{enumerate}
\item $X_1\overset{d}{=}X_2$ if and only if for any $d_1\times d_1$ symmetric matrix $A$, there is $\mathbb{E}_1[\langle AX_1,X_1\rangle]=\mathbb{E}_2[\langle AX_2,X_2\rangle]$.

\item $\eta_1\overset{d}{=}\eta_2$ if and only if for any $p\in\mathbb{R}^{d_2}$, there is $\mathbb{E}_1[\langle p,\eta_1\rangle]=\mathbb{E}_2[\langle p,\eta_2\rangle]$.

\end{enumerate}
\end{lem}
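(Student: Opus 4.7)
The plan is to handle the two equivalences separately. The necessity directions are immediate: if $X_1 \overset{d}{=} X_2$, then taking the (polynomial, hence $C_{l,Lip}$) test function $\varphi(x) = \langle Ax, x\rangle$ gives $\mathbb{E}_1[\langle AX_1, X_1\rangle] = \mathbb{E}_2[\langle AX_2, X_2\rangle]$, and likewise $\varphi(x) = \langle p, x\rangle$ handles part 2. The content of the lemma therefore lies in the two sufficiency directions.

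For the sufficiency in part 2, I would invoke Definition \ref{rm:1} to obtain bounded closed convex sets $\Gamma_i \subset \mathbb{R}^{d_2}$ with $\mathbb{E}_i[\varphi(\eta_i)] = \max_{x \in \Gamma_i}\varphi(x)$ for every $\varphi \in C_{l,Lip}(\mathbb{R}^{d_2})$. Specializing to $\varphi(x) = \langle p, x\rangle$, the hypothesis says that the support functions $h_{\Gamma_i}(p) := \max_{x \in \Gamma_i}\langle p, x\rangle$ agree for every $p \in \mathbb{R}^{d_2}$. Since a bounded closed convex set is uniquely determined by its support function, $\Gamma_1 = \Gamma_2$, and hence $\mathbb{E}_1[\varphi(\eta_1)] = \mathbb{E}_2[\varphi(\eta_2)]$ for all $\varphi \in C_{l,Lip}$, giving $\eta_1 \overset{d}{=} \eta_2$.

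For the sufficiency in part 1, I would adopt Peng's PDE approach. For each $\varphi \in C_{l,Lip}(\mathbb{R}^{d_1})$, define
\[
u_i(t, x) := \mathbb{E}_i\bigl[\varphi\bigl(x + \sqrt{t}\,X_i\bigr)\bigr], \quad t \geq 0,\; x \in \mathbb{R}^{d_1}.
\]
The main obstacle, and the heart of the argument, is showing that $u_i$ is the unique viscosity solution of the G-heat equation
\[
\partial_t u - G_i(D^2 u) = 0, \quad u(0, \cdot) = \varphi, \qquad G_i(A) := \tfrac{1}{2}\mathbb{E}_i[\langle AX_i, X_i\rangle].
\]
The derivation uses the G-normal self-similarity $aX_i + b\hat X_i \overset{d}{=} \sqrt{a^2+b^2}\,X_i$ (with $\hat X_i$ an independent copy of $X_i$), a Taylor expansion of $\varphi$ in the spatial variable, and the sub-additivity/positive homogeneity of $\mathbb{E}_i$ to identify the generator; because $u_i$ need not be $C^2$, the argument has to be carried out in the viscosity-solution framework.

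Once this PDE characterization is in place, the hypothesis $\mathbb{E}_1[\langle AX_1, X_1\rangle] = \mathbb{E}_2[\langle AX_2, X_2\rangle]$ for every symmetric $A$ gives $G_1 \equiv G_2$, so $u_1$ and $u_2$ solve the same Cauchy problem. By the standard comparison principle for the G-heat equation, $u_1 \equiv u_2$; evaluating at $(t, x) = (1, 0)$ yields $\mathbb{E}_1[\varphi(X_1)] = \mathbb{E}_2[\varphi(X_2)]$ for every $\varphi \in C_{l,Lip}$, i.e., $X_1 \overset{d}{=} X_2$.
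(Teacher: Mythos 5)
The paper states this lemma without proof, treating it as Peng's known characterization of the maximal and G-normal distributions; your argument reproduces exactly the standard proof from that source --- the support-function identification of the convex set $\Gamma$ for the maximal case, and the viscosity-solution/comparison-principle characterization via the G-heat equation $\partial_t u - G(D^2u)=0$ for the G-normal case --- and is correct. The one caveat is that the hardest step (showing $u_i(t,x)=\mathbb{E}_i[\varphi(x+\sqrt{t}X_i)]$ is the unique viscosity solution of that Cauchy problem) is only sketched, but you correctly identify it as the crux and list the right ingredients (self-similarity plus independence to get the dynamic-programming identity, Taylor expansion, and sublinearity to identify the generator).
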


Next, we introduce the concept of convolution, which allows random vectors to be weakly dependent in the sense of Peng independence.
\begin{definition}
Suppose $X$ and $Y$ are two d-dimensional random vectors on the sublinear expectation space $(\Omega,\mathcal{H},\mathbb{E})$. Then $Y$ and $X$ are said to be of convolution, if for each function $\varphi\in C_{l,Lip}(\mathbb{R}^{d})$, there is
\[\mathbb{E}[\varphi(X+Y)]=\mathbb{E}[\mathbb{E}[\varphi(x+Y)]_{x=X}].\]
A sequence of random vectors $\{X_i\}_{i=1}^{\infty}$ is called a convolutionary sequence, if $\sum\limits_{i=k}^nX_i$ and $X_{n+1}$ are of convolution for any $n \in \mathbb{N}^+$ and $k\leq n$.
\end{definition}
From the definition, it's straightforward that if $\{X_i\}_{i=1}^{\infty}$ is an independent sequence, then it's a convolutionary one. However, if $Y$ and $X$ are of convolution, the property mentioned in Remark \ref{rm1} does not necessarily hold since the function $\varphi(x,y)=xy$ can not be represented by any function of $x+y$. In the following section, we establish an equivalence between the partial sums of the two sequences.
\section{The comparison theorem}
Inspired by the comparison theorem between negatively dependent and independent random variables under classic linear probability proposed by Shao\cite{SQ}, in this section, we provide a comparison theorem between convolutionary and Peng independent random vectors such that some limit results for Peng independent random vectors can be generalized to the convolutionary situation.
\begin{thm}\label{tm:1}
Let $\{X_i\}_{i=1}^{\infty}$ be a convolutionary sequence of $d$-dimensional  random vectors on the sublinear expectation space $(\Omega,\mathcal{H},\mathbb{E})$. Then there exists an independent sequence of $d$-dimensional random vectors $\{\hat{X}_i\}_{i=1}^{\infty}$ on a sublinear expectation space $(\hat{\Omega},\hat{\mathcal{H}},\hat{\mathbb{E}})$ such that  $\hat{X}_i\overset{d}{=}X_i$ for $i\in \mathbb{N}^{+}$ and
\begin{equation}\label{eqc}
\mathbb{E}[\varphi(\sum\limits_{i=m+1}^{m+n} X_i)]= \hat{\mathbb{E}}[\varphi(\sum\limits_{i=m+1}^{m+n} \hat{X}_i)],\quad \forall m \in \mathbb{N}, n \in \mathbb{N}^{+}, \varphi \in C_{b,Lip}(\mathbb{R}^d),
\end{equation}
where $C_{b,Lip}(\mathbb{R}^d)$ denotes bounded and Lipschitz continuous functions on $\mathbb{R}^d$, which is a subset of $C_{l,Lip}(\mathbb{R}^d)$.
\end{thm}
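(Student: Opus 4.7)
The plan is to build $(\hat\Omega,\hat{\mathcal H},\hat{\mathbb{E}})$ as an infinite product copy of the original space and then verify the identity \eqref{eqc} by induction on $n$, using convolution on the left side and independence on the right side to peel off the last summand one at a time.

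\medskip

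\textbf{Construction of the space.} For each $i\in\mathbb{N}^+$ take $(\Omega_i,\mathcal{H}_i,\mathbb{E}_i)$ to be a copy of $(\Omega,\mathcal{H},\mathbb{E})$, and form the product sublinear expectation space $(\hat\Omega,\hat{\mathcal H},\hat{\mathbb{E}}):=\bigl(\prod_{i=1}^{\infty}\Omega_i,\,\bigotimes_{i=1}^{\infty}\mathcal{H}_i,\,\bigotimes_{i=1}^{\infty}\mathbb{E}_i\bigr)$. Define $\hat X_i(\omega_1,\omega_2,\ldots):=X_i(\omega_i)$. By Proposition \ref{pr1}, for every $i$ we have $\hat X_i\overset{d}{=}X_i$, and $\hat X_{i+1}$ is independent from $(\hat X_1,\ldots,\hat X_i)$ in the Peng sense, so $\{\hat X_i\}_{i=1}^{\infty}$ is an independent sequence.

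\medskip

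\textbf{Induction on $n$.} Fix $m\in\mathbb{N}$. For $n=1$ the identity reduces to $\mathbb{E}[\varphi(X_{m+1})]=\hat{\mathbb{E}}[\varphi(\hat X_{m+1})]$, which is immediate from $\hat X_{m+1}\overset{d}{=}X_{m+1}$. Assume \eqref{eqc} holds for some $n\geq1$ and all $\varphi\in C_{b,Lip}(\mathbb{R}^d)$. Write $S_n:=\sum_{i=m+1}^{m+n}X_i$ and $\hat S_n:=\sum_{i=m+1}^{m+n}\hat X_i$, and introduce the auxiliary function
\[
\psi(x):=\mathbb{E}[\varphi(x+X_{m+n+1})],\qquad x\in\mathbb{R}^d.
\]
Since $\varphi$ is bounded, $\psi$ is bounded by $\sup|\varphi|$; since $\varphi$ is $L$-Lipschitz, the sub-additivity and monotonicity of $\mathbb{E}$ yield
\[
|\psi(x)-\psi(y)|\leq \mathbb{E}[|\varphi(x+X_{m+n+1})-\varphi(y+X_{m+n+1})|]\leq L|x-y|,
\]
so $\psi\in C_{b,Lip}(\mathbb{R}^d)$ and the inductive hypothesis applies to $\psi$.

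\medskip

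\textbf{Closing the induction.} On the left-hand side, the convolutionary sequence property gives that $S_n$ and $X_{m+n+1}$ are of convolution, hence
\[
\mathbb{E}\!\Bigl[\varphi\Bigl(\sum_{i=m+1}^{m+n+1}X_i\Bigr)\Bigr]=\mathbb{E}[\varphi(S_n+X_{m+n+1})]=\mathbb{E}[\psi(S_n)].
\]
On the right-hand side, $\hat X_{m+n+1}$ is independent from $(\hat X_1,\ldots,\hat X_{m+n})$, and $\hat S_n$ is a $C_{l,Lip}$ function of this tuple, so the Peng independence definition gives
\[
\hat{\mathbb{E}}\!\Bigl[\varphi\Bigl(\sum_{i=m+1}^{m+n+1}\hat X_i\Bigr)\Bigr]=\hat{\mathbb{E}}\bigl[\hat{\mathbb{E}}[\varphi(x+\hat X_{m+n+1})]_{x=\hat S_n}\bigr]=\hat{\mathbb{E}}[\psi(\hat S_n)],
\]
where in the last equality we used $\hat X_{m+n+1}\overset{d}{=}X_{m+n+1}$. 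The inductive hypothesis applied to $\psi$ equates the two expressions, closing the induction.

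\medskip

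\textbf{Main obstacle.} Nothing in the argument is deep; the only point that requires care is the verification that $\psi$ lies in $C_{b,Lip}(\mathbb{R}^d)$, which is exactly why the theorem is stated with $\varphi\in C_{b,Lip}$ rather than $C_{l,Lip}$: for merely linear-growth $\varphi$ the inner sublinear expectation $\mathbb{E}[\varphi(x+X_{m+n+1})]$ need not be Lipschitz or even finite without further integrability of $X_{m+n+1}$, and $\psi$ would no longer be a legal test function for the inductive hypothesis. Restricting to bounded Lipschitz test functions is therefore the natural class that makes the peeling step close cleanly.
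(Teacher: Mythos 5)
Your proposal is correct and follows essentially the same route as the paper: construct $\{\hat X_i\}$ on the infinite product space via Proposition \ref{pr1}, then induct on $n$, peeling off the last summand with the convolution identity on one side and Peng independence on the other, the key verification being that the auxiliary function $\psi(x)=\mathbb{E}[\varphi(x+X_{m+n+1})]$ stays in $C_{b,Lip}(\mathbb{R}^d)$ so the inductive hypothesis applies to it. The only cosmetic difference is that the paper reduces to $m=0$ and writes out the $n=2$ case separately before the general step.
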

\begin{proof}
To begin with, we verify the existence of $\{\hat{X}_i\}_{i=1}^{\infty}$. According to Proposition \ref{pr1}, for the convolutionary sequence $\{X_i\}_{i=1}^{\infty}$ on $(\Omega,\mathcal{H},\mathbb{E})$, define $\{\hat{X}_i\}_{i=1}^{\infty}$ on the sublinear expectation space $(\Omega^{\infty},\mathcal{H}^{\otimes \infty},\mathbb{E}^{\otimes \infty})$ by
\[\hat{X}_i(\omega_1,\omega_2,\cdots)=X_i(\omega_i),\quad i=1,2,\cdots.\]
Then $\{\hat{X}_i\}_{i=1}^{\infty}$ is a sequence of Peng independent random vectors with $\hat{X}_i\overset{d}{=}X_i$ for $i\in \mathbb{N}^{+}$.\\
Next we will prove Eq. \eqref{eqc}. Without loss of generality, we only consider the case of $m=0$. When $n=1$, the equation holds trivially. Considering $n=2$, for any $\varphi \in C_{l,Lip}(\mathbb{R}^d)$, we have
\[\mathbb{E}[\varphi(X_1+X_2)]=\mathbb{E}[\mathbb{E}[\varphi(x+X_2)]_{x=X_1}].\]
Then denote
\[\varphi_i(x)=\mathbb{E}[\varphi(x+X_i)],\quad \psi_i(x)=\hat{\mathbb{E}}[\varphi(x+\hat{X}_i)],\quad i=2,3,\cdots.\]
It's straightforward that $\varphi_i(x)=\psi_i(x)$ since $X_i\overset{d}{=}\hat{X}_i$. For $\varphi \in C_{b,Lip}(\mathbb{R}^d)$, there exists a constant $C>0$ such that $|\varphi(x)-\varphi(y)|\leq C|x-y|$. It's obvious that for any $i$, functions $\varphi_i,\,\psi_i\in C_{b,Lip}(\mathbb{R}^d)$ since
\[
|\varphi_i(x)-\varphi_i(y)| =|\mathbb{E}[\varphi(x+X_i)]-\mathbb{E}[\varphi(y+X_i)]|
\leq \mathbb{E}[|\varphi(x+X_i)-\varphi(y+X_i)|]
\leq C|x-y|.\]
Consequently, we can obtain
\[
\mathbb{E}[\varphi(X_1+X_2)]=\mathbb{E}[\varphi_2(X_1)]=\mathbb{E}[\psi_2(X_1)]={\mathbb{E}}[\psi_2(\hat{X}_1)]
=\hat{\mathbb{E}}[\hat{\mathbb{E}}[\varphi(x+\hat{X}_2)]_{x=\hat{X}_1}]=\hat{\mathbb{E}}[\varphi(\hat{X}_1+\hat{X}_2)].\]
Furthermore, we assume that Eq. \eqref{eqc} holds for $n=n_0$, that is $\mathbb{E}[\varphi(\sum\limits_{i=1}^{n_0} X_i)]= \hat{\mathbb{E}}[\varphi(\sum\limits_{i=1}^{n_0} \hat{X}_i)]$.\\
Then consider the case of $n=n_0+1$. Denote the partial sum of $\{{X}_i\}_{i=1}^{\infty}$ and $\{\hat{X}_i\}_{i=1}^{\infty}$ by $S_n=\sum\limits_{i=1}^{n}X_i$ and $\hat S_n=\sum\limits_{i=1}^{n}\hat X_i$ respectively. We have
\begin{align*}
\mathbb{E}[\varphi(S_{n_0+1})] & = \mathbb{E}[\mathbb{E}[\varphi(x+X_{n_0+1})]_{x=S_{n_0}}]
=\mathbb{E}[\varphi_{n_0+1}(S_{n_0})]
= \hat{\mathbb{E}}[\varphi_{n_0+1}(\hat S_{n_0})]\\
&=\hat{\mathbb{E}}[\psi_{n_0+1}(\hat S_{n_0})]
=\hat{\mathbb{E}}[\hat{\mathbb{E}}[\varphi(x+\hat X_{n_0+1})]_{x=\hat S_{n_0}}] =\hat{\mathbb{E}}[\varphi(\hat S_{n_0+1})].
\end{align*}
Thus Eq. \eqref{eqc} also holds for $n=n_0+1$. Therefore by induction, Eq. \eqref{eqc} holds for any $n \in \mathbb{N}^{+}$ and $\varphi \in C_{b,Lip}(\mathbb{R}^d)$ with $m=0$. Eq. \eqref{eqc} can be proved for all $m \in \mathbb{N}$ through the same method.
\end{proof}
\section{Limit theorems for convolutionary random vectors}
In this section, we derive several limit theorems for convolutionary random vectors through the comparison theorem. Initially, we introduce the weak law of large numbers and central limit theorem for convolutionary random vectors. Then we demonstrate a law of iterated logarithm for convolutionary random variables.

Before any generalization, we introduce a lemma extending convergent results for bounded Lipschitz continuous functions to all continuous functions with linear growth condition.
\begin{lem}[Peng\cite{PS08b}]\label{lm:a1}
Let $(\Omega,{\mathcal{H}},{\mathbb{E}})$ and $(\hat{\Omega},\hat{\mathcal{H}},\hat{\mathbb{E}})$ be two sublinear expectation spaces. $Y_n\in\mathcal{H}$, $n=1,2,\cdots$ and $\hat Y\in\hat{\mathcal{H}}$ are $d$-dimensional random vectors satisfying that $\sup_{n}\mathbb{E}[|Y_n|^2]+\hat{\mathbb{E}}[|\hat Y|^2]<\infty$. If $\lim\limits_{n\rightarrow \infty}\mathbb{E}[\varphi(Y_n)]=\hat{\mathbb{E}}[\varphi(\hat Y)]$ holds for each $\varphi \in C_{b,Lip}(\mathbb{R}^d)$, then it also holds for all continuous functions $\varphi$ on $\mathbb{R}^d$ satisfying linear growth condition $|\varphi(x)|\leq C(1+|x|)$ for some $C>0$.
\end{lem}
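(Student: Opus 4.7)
The plan is to combine a tail-truncation estimate (using the uniform second-moment bound) with a uniform Lipschitz approximation of the restriction of $\varphi$ to a large ball. Fix a continuous $\varphi$ with $|\varphi(x)|\le C(1+|x|)$ and fix $\varepsilon>0$. For $N\in\mathbb{N}$ choose a cutoff $\chi_N\in C_{b,Lip}(\mathbb{R}^d)$ with $0\le \chi_N\le 1$, $\chi_N\equiv 1$ on $\{|x|\le N\}$ and $\chi_N\equiv 0$ on $\{|x|\ge N+1\}$. Then $\varphi\chi_N$ is continuous and compactly supported, hence uniformly continuous, so by mollification (or by standard approximation of continuous compactly supported functions by Lipschitz ones) I can pick $\varphi_N\in C_{b,Lip}(\mathbb{R}^d)$ with $\|\varphi_N-\varphi\chi_N\|_\infty<\varepsilon$.

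The next step is the tail bound. Let $M:=\sup_n\mathbb{E}[|Y_n|^2]+\hat{\mathbb{E}}[|\hat Y|^2]<\infty$. Take a smooth $\psi_N\in C_{b,Lip}(\mathbb{R}^d)$ with $0\le\psi_N\le 1$, $\psi_N\equiv 0$ on $\{|x|\le N\}$, $\psi_N\equiv 1$ on $\{|x|\ge N+1\}$, and $\psi_N(x)\le |x|/N$ for $|x|\ge N$. Since $\varphi-\varphi_N$ vanishes (up to the uniform error $\varepsilon$) on $\{|x|\le N\}$ and is dominated by $C(1+|x|)+\|\varphi_N\|_\infty$ everywhere, monotonicity and sub-additivity give
\[
\mathbb{E}\bigl[|\varphi(Y_n)-\varphi_N(Y_n)|\bigr]\le \varepsilon+C'\,\mathbb{E}\bigl[(1+|Y_n|)\psi_N(Y_n)\bigr]\le \varepsilon+\frac{C''M}{N},
\]
and the analogous bound holds with $\mathbb{E},Y_n$ replaced by $\hat{\mathbb{E}},\hat Y$, using $(1+|x|)\psi_N(x)\le \psi_N(x)+|x|^2/N$ and the moment bound $M$.

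Then I combine the three contributions by the triangle inequality:
\[
\bigl|\mathbb{E}[\varphi(Y_n)]-\hat{\mathbb{E}}[\varphi(\hat Y)]\bigr|\le \bigl|\mathbb{E}[\varphi(Y_n)-\varphi_N(Y_n)]\bigr|+\bigl|\mathbb{E}[\varphi_N(Y_n)]-\hat{\mathbb{E}}[\varphi_N(\hat Y)]\bigr|+\bigl|\hat{\mathbb{E}}[\varphi_N(\hat Y)-\varphi(\hat Y)]\bigr|.
\]
The first and third terms are at most $\varepsilon+C''M/N$ by the previous step (once $N$ is chosen so that $C''M/N<\varepsilon$). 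For the middle term, since $\varphi_N\in C_{b,Lip}(\mathbb{R}^d)$, the hypothesis yields $\limsup_{n\to\infty}|\mathbb{E}[\varphi_N(Y_n)]-\hat{\mathbb{E}}[\varphi_N(\hat Y)]|=0$. Taking $\limsup_n$ gives $\limsup_n|\mathbb{E}[\varphi(Y_n)]-\hat{\mathbb{E}}[\varphi(\hat Y)]|\le 4\varepsilon$, and since $\varepsilon$ was arbitrary we conclude $\mathbb{E}[\varphi(Y_n)]\to\hat{\mathbb{E}}[\varphi(\hat Y)]$.

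The only delicate point is the tail control: in the sublinear setting one cannot invoke an indicator-based Chebyshev inequality, so the smooth cutoff $\psi_N$ plus the pointwise domination $(1+|x|)\psi_N(x)\le 1+|x|^2/N$ is what allows the uniform second moments to convert into a small tail. Everything else is a routine truncate-and-approximate argument.
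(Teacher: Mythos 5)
The paper does not prove this lemma; it is quoted from Peng \cite{PS08b} and used as a black box, so there is no in-paper argument to compare against. Your truncate-and-approximate proof is correct and is essentially the standard argument behind the cited result: the uniform second-moment bound converts into a tail estimate through the smooth cutoff (using $(1+|x|)\psi_N(x)\le \psi_N(x)+|x|^2/N$ and $\psi_N(x)\le |x|^2/N^2$, so the tail term is $O(M/N)$), the compactly supported piece is uniformly approximated by a bounded Lipschitz function, and the hypothesis disposes of the middle term. Two small points. First, your displayed tail inequality needs the pointwise bound $|\varphi-\varphi_N|\le\varepsilon+C(1+|x|)\,\psi_N(x)$, which requires $1-\chi_N\le\psi_N$; since you construct $\chi_N$ and $\psi_N$ independently this is not automatic, but it is immediate if you simply set $\psi_N:=1-\chi_N$, which has every property you use. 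Second, applying $\mathbb{E}$ to $|\varphi(Y_n)-\varphi_N(Y_n)|$ tacitly assumes $\varphi(Y_n)\in\mathcal{H}$ for a merely continuous $\varphi$ of linear growth; this is already presupposed by the lemma's statement (otherwise $\mathbb{E}[\varphi(Y_n)]$ would be undefined), so it is a feature of the framework rather than a gap in your argument. You are also right that the smooth cutoff, rather than an indicator-based Chebyshev bound, is the correct device in the sublinear setting, since $\mathcal{H}$ is only closed under $C_{l,Lip}$ compositions.
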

\subsection{Weak law of large numbers}
\begin{thm}\label{nwlln}
Let $\{X_i\}_{i=1}^{\infty}$ be a convolutionary sequence of $d$-dimensional  identically distributed random vectors on sublinear expectation space $(\Omega,\mathcal{H},\mathbb{E})$ with $\mathbb{E}[|X_1|^2]<\infty$. Denoting the partial sum of $\{X_i\}_{i=1}^{\infty}$ by $S_n=\sum_{i=1}^{n}X_i$, we have
\begin{equation}\label{eq:5}
\lim\limits_{n\rightarrow\infty}\mathbb{E}[\varphi(\frac{1}{n}S_n)]=\mathbb{E}[\varphi(\eta)],
\end{equation}
where $\eta$ follows the maximal distribution characterized by $G(p)=\mathbb{E}[\langle p,X_1\rangle] $ and $\varphi$ is any continuous function on $\mathbb{R}^d$ satisfying linear growth condition.
\end{thm}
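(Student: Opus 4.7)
The plan is to reduce the convolutionary case to Peng's i.i.d.\ case via the comparison theorem of Section~3, and then upgrade the class of test functions with Lemma~\ref{lm:a1}.

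First, I would invoke Theorem~\ref{tm:1} to produce an independent sequence $\{\hat X_i\}_{i=1}^{\infty}$ on a product sublinear expectation space $(\hat\Omega,\hat{\mathcal{H}},\hat{\mathbb{E}})$ with $\hat X_i\overset{d}{=}X_i$ for every $i$ and with
\[
\mathbb{E}\bigl[\varphi(S_n/n)\bigr]=\hat{\mathbb{E}}\bigl[\varphi(\hat S_n/n)\bigr],\quad \forall\,\varphi\in C_{b,Lip}(\mathbb{R}^d),
\]
where $\hat S_n=\sum_{i=1}^n\hat X_i$; this uses the positive homogeneity of $\mathbb{E}$ and $\hat{\mathbb{E}}$ to absorb the factor $1/n$ inside $\varphi$. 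Since the $X_i$ are identically distributed, so are the $\hat X_i$, and $\{\hat X_i\}$ is therefore an i.i.d.\ sequence in Peng's sense, to which Theorem~\ref{PLLN} applies.

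Second, I would check that Peng's limiting maximal distribution for $\{\hat X_i\}$ coincides with the one in the statement. Let $\hat\eta$ be the maximal-distributed vector characterized by $\hat G(p)=\hat{\mathbb{E}}[\langle p,\hat X_1\rangle]$. Since $\hat X_1\overset{d}{=}X_1$, we have $\hat G(p)=\mathbb{E}[\langle p,X_1\rangle]=G(p)$ for every $p\in\mathbb{R}^d$, so by the characterization in Lemma~\ref{lm:3}(2), $\hat\eta\overset{d}{=}\eta$. Hence, for $\varphi\in C_{b,Lip}(\mathbb{R}^d)$, Theorem~\ref{PLLN} gives
\[
\lim_{n\to\infty}\mathbb{E}\bigl[\varphi(S_n/n)\bigr]=\lim_{n\to\infty}\hat{\mathbb{E}}\bigl[\varphi(\hat S_n/n)\bigr]=\hat{\mathbb{E}}[\varphi(\hat\eta)]=\mathbb{E}[\varphi(\eta)].
\]

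Finally, I would extend the class of admissible test functions from $C_{b,Lip}(\mathbb{R}^d)$ to continuous $\varphi$ of linear growth via Lemma~\ref{lm:a1} with $Y_n:=S_n/n$ and $\hat Y:=\eta$. The hypothesis to verify is the uniform second-moment bound $\sup_n\mathbb{E}[|S_n/n|^2]+\mathbb{E}[|\eta|^2]<\infty$. For the $\eta$ term, the set $\Gamma$ of Definition~\ref{rm:1} is bounded because $G$ is finite-valued (using $\mathbb{E}[|X_1|]\le(\mathbb{E}[|X_1|^2])^{1/2}<\infty$), so $|\eta|$ is bounded. For $\mathbb{E}[|S_n/n|^2]$, I would use Cauchy–Schwarz pointwise, $|S_n|^2\le n\sum_{i=1}^n|X_i|^2$, together with sub-additivity and the identical-distribution hypothesis to conclude $\mathbb{E}[|S_n|^2]\le n^2\mathbb{E}[|X_1|^2]$ and hence $\mathbb{E}[|S_n/n|^2]\le\mathbb{E}[|X_1|^2]<\infty$ uniformly in $n$.

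The main obstacle is essentially bookkeeping rather than conceptual: the comparison theorem only yields equality on $C_{b,Lip}$, so one must be careful that the upgrade to linear growth is performed \emph{after} the limit is identified on the bounded Lipschitz class, and that the second-moment control needed by Lemma~\ref{lm:a1} is derived purely from sub-additivity and identical distribution (not from any independence-type product rule), since only convolutionary structure is available on the original space.
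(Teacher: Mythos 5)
Your proposal is correct and follows essentially the same route as the paper's proof: reduce to the Peng independent copies via Theorem~\ref{tm:1} (absorbing the factor $1/n$ into the bounded Lipschitz test function), identify the limit law through Lemma~\ref{lm:3}, apply Theorem~\ref{PLLN}, and then upgrade to linear-growth test functions with Lemma~\ref{lm:a1}. Your explicit verification of $\sup_n\mathbb{E}[|S_n/n|^2]<\infty$ via Cauchy--Schwarz and sub-additivity fills in a detail the paper only asserts; the one small quibble is that absorbing $1/n$ into $\varphi$ rests on $C_{b,Lip}$ being closed under precomposition with scalings, not on positive homogeneity of $\mathbb{E}$.
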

\begin{proof}
Firstly, we will prove that Eq. \eqref{eq:5} holds for all $\varphi \in C_{b,Lip}(\mathbb{R}^d)$. According to Theorem \ref{tm:1}, given the convolutionary sequence of random vectors $\{X_i\}_{i=1}^{\infty}$, we can find the corresponding Pend independent random vectors $\{\hat{X}_i\}_{i=1}^{\infty}$ on the product space $(\hat{\Omega},\hat{\mathcal{H}},\hat{\mathbb{E}}):=(\Omega^\infty,\mathcal{H}^{\otimes \infty},\mathbb{E}^{\otimes \infty})$. Then $\{\hat{X}_i\}_{i=1}^{\infty}$ are identically distributed with $\hat{\mathbb{E}}[\hat X_1^2]<\infty$. Applying Theorem \ref{PLLN}, there is
\[\lim\limits_{n\rightarrow\infty}\hat{\mathbb{E}}[\varphi(\frac{1}{n}\hat S_n)]=\hat{\mathbb{E}}[\varphi(\hat\eta)],\]
where $\hat S_n=\sum_{i=1}^{n}\hat X_i$ and $\hat\eta$ is a $d$-dimensional maximal distributed random vector on $(\hat{\Omega},\hat{\mathcal{H}},\hat{\mathbb{E}})$ characterized by $\hat G(p)=\hat{\mathbb{E}}[\langle p,\hat X_1\rangle] $. Suppose $\eta$ is a $d$-dimensional maximal distributed random vector on $(\Omega,\mathcal{H},\mathbb{E})$ characterized by $G(p)=\mathbb{E}[\langle p,X_1\rangle]=\hat{\mathbb{E}}[\langle p,\hat X_1\rangle]=\hat G(p) $, by Lemma \ref{lm:3}, we get
\[\mathbb{E}[\varphi(\eta)]=\hat{\mathbb{E}}[\varphi(\hat\eta)],\quad \forall \varphi\in C_{l,Lip}(\mathbb{R}^d).\]
For any $n \in \mathbb{N}^{+}$ and $\varphi\in C_{b,Lip}(\mathbb{R}^d)$, denotes $\varphi_n(x)=\varphi(\frac{1}{n}x)$. It is obvious that $\varphi_n(x)$ also belongs to $C_{b,Lip}(\mathbb{R}^d)$. Thus from Theorem \ref{tm:1}, we obtain
\[\lim\limits_{n\rightarrow\infty}{\mathbb{E}}[\varphi( \frac{1}{n}S_n)]=\lim\limits_{n\rightarrow\infty}\mathbb{E}[\varphi_n(S_n)]=\lim\limits_{n\rightarrow\infty}\hat{\mathbb{E}}[\varphi_n(\hat S_n)]={\mathbb{E}}[\varphi(\eta)].\]
Therefore we prove that Eq. \eqref{eq:5} holds for all $\varphi \in C_{b,Lip}(\mathbb{R}^d)$. \\
Since $\mathbb{E}[|X_1|^2]<\infty$ and $\mathbb{E}[|\eta|^2]<\infty$, there is
$\sup\limits_{n}\mathbb{E}[|\frac{1}{n}S_n|^2]+{\mathbb{E}}[|\eta|^2]<\infty$.
Then applying Lemma \ref{lm:a1}, we can extend the result for all continuous functions on $\mathbb{R}^d$ satisfying liner growth condition.
\end{proof}
\subsection{Central limit theorem}
By the similar method, we can establish the central limit theorem for convolutionary random vectors.
\begin{thm}
Let $\{X_i\}_{i=1}^{\infty}$ be a convolutionary sequence of $d$-dimensional identically distributed random vectors on sublinear expectation space $(\Omega,\mathcal{H},\mathbb{E})$ with $\mathbb{E}[X_1]=\mathcal{E}[X_1]=0$ and $\mathbb{E}[{|X_1|}^{2+\delta}]<\infty$ for some $\delta>0$. Then for the partial sum $S_n=\sum_{i=1}^{n}X_i$ and any linear growth function $\varphi$ in $C(\mathbb{R}^d)$, there is
\begin{equation}\label{eq:7}
\lim\limits_{n\rightarrow\infty}\mathbb{E}[\varphi(\frac{1}{\sqrt n}S_n)]=\mathbb{E}[\varphi(X)],
\end{equation}
where $X$ follows G-normal distribution characterized by
$G(A)=\frac{1}{2}\mathbb{E}[\langle AX_1,X_1\rangle]$.
\end{thm}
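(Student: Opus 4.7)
The plan is to mimic the strategy used for the weak law of large numbers (Theorem \ref{nwlln}): transfer the problem to a Peng independent sequence via the comparison theorem (Theorem \ref{tm:1}), invoke Peng's central limit theorem (Theorem \ref{PCLT}) on that side, identify the $G$-normal limits via Lemma \ref{lm:3}, and finally extend the convergence from bounded Lipschitz test functions to linear growth continuous test functions via Lemma \ref{lm:a1}.

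More concretely, I would first apply Theorem \ref{tm:1} to the convolutionary sequence $\{X_i\}_{i=1}^{\infty}$ to obtain a Peng independent sequence $\{\hat{X}_i\}_{i=1}^{\infty}$ on the product space $(\hat{\Omega},\hat{\mathcal{H}},\hat{\mathbb{E}})=(\Omega^{\infty},\mathcal{H}^{\otimes\infty},\mathbb{E}^{\otimes\infty})$ with $\hat{X}_i \overset{d}{=} X_i$. Because $\overset{d}{=}$ preserves $\mathbb{E}[\varphi(\cdot)]$ for every $\varphi \in C_{l,Lip}$, the sequence $\{\hat{X}_i\}_{i=1}^{\infty}$ is identically distributed with $\hat{\mathbb{E}}[\hat{X}_1]=\hat{\mathcal{E}}[\hat{X}_1]=0$ and $\hat{\mathbb{E}}[|\hat{X}_1|^{2+\delta}]<\infty$, so Theorem \ref{PCLT} yields $\lim_{n\to\infty} \hat{\mathbb{E}}[\varphi(\hat{S}_n/\sqrt{n})] = \hat{\mathbb{E}}[\varphi(\hat{X})]$ for every linear growth $\varphi \in C(\mathbb{R}^d)$, where $\hat{X}$ is $G$-normal with generator $\hat{G}(A) = \frac{1}{2}\hat{\mathbb{E}}[\langle A\hat{X}_1,\hat{X}_1\rangle]$. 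The identity $\hat{X}_1 \overset{d}{=} X_1$ forces $\hat{G}(A)=G(A)$ for every symmetric $A$, so Lemma \ref{lm:3}(1) gives $X \overset{d}{=} \hat{X}$ and hence $\mathbb{E}[\varphi(X)] = \hat{\mathbb{E}}[\varphi(\hat{X})]$. For $\varphi \in C_{b,Lip}(\mathbb{R}^d)$ the rescaled function $x \mapsto \varphi(x/\sqrt{n})$ also lies in $C_{b,Lip}(\mathbb{R}^d)$, so Theorem \ref{tm:1} gives $\mathbb{E}[\varphi(S_n/\sqrt{n})] = \hat{\mathbb{E}}[\varphi(\hat{S}_n/\sqrt{n})]$, and chaining the identities already obtained establishes Eq. \eqref{eq:7} for all $\varphi \in C_{b,Lip}(\mathbb{R}^d)$.

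The one step that requires genuine attention is the upgrade to linear growth continuous test functions via Lemma \ref{lm:a1}, since it needs a uniform bound $\sup_n \mathbb{E}[|S_n/\sqrt{n}|^2]<\infty$. Unlike in the LLN, where the crude estimate $\mathbb{E}[|S_n|^2] \leq n^2\mathbb{E}[|X_1|^2]$ suffices after division by $n^2$, here I must exploit the zero-mean hypothesis. The plan is: for $x \in \mathbb{R}^d$ and $Y$ of convolution with some $X$ satisfying $\mathbb{E}[Y]=\mathcal{E}[Y]=0$, expand $|x+Y|^2 = |x|^2 + 2\langle x,Y\rangle + |Y|^2$ and combine subadditivity with the vanishing of $\mathbb{E}[\langle x,Y\rangle]$ (which follows componentwise from the mean assumption) to get $\mathbb{E}[|x+Y|^2] \leq |x|^2 + \mathbb{E}[|Y|^2]$. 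Feeding this estimate into the convolutionary relation $\mathbb{E}[|S_n + X_{n+1}|^2] = \mathbb{E}[\mathbb{E}[|x+X_{n+1}|^2]_{x=S_n}]$ and iterating yields $\mathbb{E}[|S_n|^2] \leq n\mathbb{E}[|X_1|^2]$, which is the desired uniform bound. Finite second moment of the $G$-normal $X$ being standard, Lemma \ref{lm:a1} then lifts Eq. \eqref{eq:7} to all continuous linear growth functions, completing the proof.
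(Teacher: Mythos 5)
Your proposal is correct and follows essentially the same route as the paper: transfer to the Peng independent copy via Theorem \ref{tm:1}, apply Theorem \ref{PCLT}, identify the two $G$-normal limits through Lemma \ref{lm:3}, and lift from $C_{b,Lip}$ to linear growth continuous functions via Lemma \ref{lm:a1}. The only substantive difference is that you supply an explicit derivation of the uniform bound $\sup_n\mathbb{E}[|S_n/\sqrt{n}|^2]<\infty$ (iterating $\mathbb{E}[|x+Y|^2]\leq|x|^2+\mathbb{E}[|Y|^2]$ through the convolution identity), whereas the paper simply asserts this bound; your added argument is sound and actually fills a detail the paper leaves implicit.
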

\begin{proof}
Similar to the proof of Theorem \ref{nwlln}, we can find the corresponding independent sequence of random vectors on $(\hat{\Omega},\hat{\mathcal{H}},\hat{\mathbb{E}}):=(\Omega^\infty,\mathcal{H}^{\otimes \infty},\mathbb{E}^{\otimes \infty})$ satisfying $\hat X_i \overset {d}{=} X_i,\,i=1,2,\cdots$. Then $\{\hat X_i\}_{i=1}^{\infty}$ satisfies the assumptions of Theorem \ref{PCLT}. Thus we have
\[\lim\limits_{n\rightarrow\infty}\hat{\mathbb{E}}[\varphi(\frac{1}{\sqrt n}\hat S_n)]=\hat{\mathbb{E}}[\varphi(\hat X)],\]
where $\hat S_n$ is the partial sum of $\{\hat X_i\}_{i=1}^{\infty}$ and $\hat X$ is a $d$-dimensional G-normal distributed random vector on $(\hat{\Omega},\hat{\mathcal{H}},\hat{\mathbb{E}})$ characterized by
$\hat G(A)=\frac{1}{2}\hat{\mathbb{E}}[\langle A\hat X_1,\hat X_1\rangle]$ for any $d\times d$ symmetric matrix $A$.\\
Suppose $X$ is a $d$-dimensional G-normal distributed random vector on $(\Omega,\mathcal{H},\mathbb{E})$ characterized by
\[G(A)=\hat G(A)=\frac{1}{2}\hat{\mathbb{E}}[\langle A\hat X_1,\hat X_1\rangle]=\frac{1}{2}\mathbb{E}[\langle AX_1,X_1\rangle].\]
Then, by Theorem \ref{tm:1} and Lemma \ref{lm:3}, we obtain that for any $\varphi \in C_{b,Lip}(\mathbb{R}^d)$,
\[\lim\limits_{n\rightarrow\infty}{\mathbb{E}}[\varphi( \frac{1}{\sqrt n}S_n)]=\lim\limits_{n\rightarrow\infty}\hat{\mathbb{E}}[\varphi(\frac{1}{\sqrt n}\hat S_n)]=\hat{\mathbb{E}}[\varphi(\hat X)]=\mathbb{E}[\varphi(X)],\]
Notice that $\mathbb{E}[{|X_1|}^{2+\delta}]<\infty$ and $\mathbb{E}[|X|^2]<\infty$, which implies that $\sup\limits_{n}\mathbb{E}[|\frac{1}{\sqrt n}S_n|^2]+{\mathbb{E}}[|\eta|^2]<\infty$. Finally applying Lemma \ref{lm:a1}, we can extend the result for all continuous functions $\mathbb{R}^d$ with liner growth condition.
\end{proof}
\subsection{Law of iterated logarithm}
Chen and Hu \cite{CZHF} establish a law of iterated logarithm for Peng independent and identically distributed random variables. In this part, we establish similar results for convolutionary random variables.\\
Let $\mathcal{Q}$ be a non-empty set of probability measures on $(\Omega,\mathcal{F})$. Same as Chen and Hu \cite{CZHF}, we also assume $\mathcal{H}\subset\bigcap_{P\in\mathcal{Q}}L^1(\Omega,\mathcal{F},P)$ and satisfies that if $X\in \mathcal{H}$, then the indicator function $I_{\{X\in A\}}$ belongs to $\mathcal{H}$ for any $ A \in \mathcal{B}(\mathbb{R})$. With these additional assumptions, we can define the sublinear expectation $\mathbb{E}$ and its conjugate expectation $\mathcal{E}$ on $\mathcal{H}$ by
\[\mathbb{E}[X]:=\sup\limits_{P\in\mathcal{Q}}E_P[X],\quad \mathcal{E}[X]:=\inf\limits_{P\in\mathcal{Q}}E_P[X],\quad \forall X\in \mathcal{H}\]
and the corresponding upper and lower capacity $\mathbb{V}$ and $v$ by
\[\mathbb{V}(A):= \sup\limits_{P\in\mathcal{Q}}P(A),\quad v(A):= \inf\limits_{P\in\mathcal{Q}}P(A),\quad \forall A\in\mathcal{F}.\]
According to Definition \ref{vc}, it is straightforward to check that the upper capacity $\mathbb{V}$ is continuous from below and the lower capacity $v$ is continuous from above. For the upper and lower capacities, we also have the Borel-Cantelli lemma.
\begin{lem}\label{lm:1}(Chen et al. \cite{CZWP}). Let $\{A_i\}^{\infty}_{i=1}$ be a sequence of events in $\mathcal{F}$. If $\sum_{i=1}^{\infty}\mathbb{V}(A_i)<\infty$, then there is $\mathbb{V}(\bigcap_{n=1}^{\infty}\bigcup_{i=n}^{\infty}A_i)=0$.
\end{lem}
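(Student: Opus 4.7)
The plan is to reduce the claim to two properties of the upper capacity $\mathbb{V}$: monotonicity (built into the definition of a capacity) and countable subadditivity. First I would verify countable subadditivity: for any sequence $\{B_i\}\subset\mathcal{F}$, since each $P\in\mathcal{Q}$ is a probability measure,
\begin{equation*}
\mathbb{V}\Bigl(\bigcup_{i=1}^{\infty}B_i\Bigr)=\sup_{P\in\mathcal{Q}}P\Bigl(\bigcup_{i=1}^{\infty}B_i\Bigr)\leq \sup_{P\in\mathcal{Q}}\sum_{i=1}^{\infty}P(B_i)\leq \sum_{i=1}^{\infty}\sup_{P\in\mathcal{Q}}P(B_i)=\sum_{i=1}^{\infty}\mathbb{V}(B_i),
\end{equation*}
the penultimate inequality being the termwise bound applied before taking $\sup$.

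Next I would set $B_n:=\bigcup_{i=n}^{\infty}A_i$ and apply countable subadditivity to get
\begin{equation*}
\mathbb{V}(B_n)\leq \sum_{i=n}^{\infty}\mathbb{V}(A_i).
\end{equation*}
Since $\sum_{i=1}^{\infty}\mathbb{V}(A_i)<\infty$ by hypothesis, the tail $\sum_{i=n}^{\infty}\mathbb{V}(A_i)$ tends to $0$ as $n\to\infty$, so $\mathbb{V}(B_n)\to 0$.

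Finally, because $\bigcap_{n=1}^{\infty}\bigcup_{i=n}^{\infty}A_i\subseteq B_n$ for every $n$, the monotonicity clause in the definition of a capacity gives $\mathbb{V}\bigl(\bigcap_{n=1}^{\infty}\bigcup_{i=n}^{\infty}A_i\bigr)\leq \mathbb{V}(B_n)$ for every $n$, and letting $n\to\infty$ yields the desired equality to $0$. I do not expect any real obstacle here: neither continuity from above nor any further structural assumption on $\mathcal{Q}$ is needed, only the pointwise countable additivity of each $P\in\mathcal{Q}$ together with the $\sup$-definition of $\mathbb{V}$. (If one wished to avoid invoking subadditivity explicitly, the same conclusion follows from continuity from below applied to $\bigcup_{i=n}^{N}A_i\uparrow B_n$ together with finite subadditivity, but the direct route above is shorter.)
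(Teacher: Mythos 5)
Your argument is correct: countable subadditivity of $\mathbb{V}$ follows immediately from its definition as a supremum of probability measures, and combined with monotonicity it gives $\mathbb{V}(\bigcap_{n}\bigcup_{i\geq n}A_i)\leq\mathbb{V}(\bigcup_{i\geq n}A_i)\leq\sum_{i\geq n}\mathbb{V}(A_i)\to 0$; you are also right that continuity from above of $\mathbb{V}$ (which is not assumed and generally fails for upper capacities) is not needed. The paper itself gives no proof, citing Chen et al.\ for this lemma, and your proof is exactly the standard argument used there.
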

Finally, we demonstrate the law of iterated logarithm for convolutionary random variables.
\begin{thm}\label{tm:3}
Suppose $\{X_i\}_{i=1}^{\infty}$ is a  convolutionary sequence of identically distributed random variables on $(\Omega,\mathcal{H},\mathbb{E})$ satisfying
$\mathbb{E}[X_1]=\mathcal{E}[X_1]=0 $,
 $\mathbb{E}[X_1^2]=\overline{\sigma}^2$ and $\mathcal{E}[X_1^2]=\underline{\sigma}^2$. With $a_n=\sqrt{2n\log\log n}$, there is
\begin{equation}\label{eq:8}
v\left(-\overline{\sigma}\leq\liminf\limits_{n\rightarrow\infty}\frac{S_n}{a_n}\leq\limsup\limits_{n\rightarrow\infty}\frac{S_n}{a_n}\leq\overline{\sigma}\right)=1.
\end{equation}
\end{thm}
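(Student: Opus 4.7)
The plan is to mirror the preceding LLN and CLT proofs in this section: pass from $\{X_i\}$ to Peng-independent identically distributed copies $\{\hat X_i\}$ via Theorem \ref{tm:1}, invoke the Chen and Hu law of iterated logarithm (Theorem \ref{CHLIN}) there, and transfer the resulting capacity statement back to $(\Omega,\mathcal{H},\mathbb{E})$. Concretely, I would take $\hat X_i(\omega_1,\omega_2,\ldots):=X_i(\omega_i)$ on $(\Omega^{\infty},\mathcal{H}^{\otimes\infty},\mathbb{E}^{\otimes\infty})$, so that $\hat X_i\overset{d}{=}X_i$ and $\hat{\mathbb{E}}[\hat X_1]=\hat{\mathcal{E}}[\hat X_1]=0$, $\hat{\mathbb{E}}[\hat X_1^2]=\overline{\sigma}^2$, $\hat{\mathcal{E}}[\hat X_1^2]=\underline{\sigma}^2$. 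Theorem \ref{CHLIN} then gives $\hat v(-\overline{\sigma}\leq\liminf \hat S_n/a_n\leq\limsup \hat S_n/a_n\leq\overline{\sigma})=1$, which by de Morgan and continuity from below of $\hat{\mathbb{V}}$ reduces to $\hat{\mathbb{V}}(\limsup_n\{\pm\hat S_n>(\overline{\sigma}+\epsilon)a_n\})=0$ for every $\epsilon>0$; the target \eqref{eq:8} is the analogous statement for $S_n$.

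To bridge the two sides I would use \eqref{eqc} at the level of individual tails. Sandwiching $I_{(c,\infty)}$ between bounded Lipschitz bumps $f_\eta\leq I_{(c,\infty)}\leq g_\eta$ with $g_\eta\leq I_{(c-\eta,\infty)}$, the comparison identity yields
\[
\mathbb{V}(S_n>c)\leq\mathbb{E}[g_\eta(S_n)]=\hat{\mathbb{E}}[g_\eta(\hat S_n)]\leq\hat{\mathbb{V}}(\hat S_n>c-\eta),
\]
and the symmetric inequality for the lower tail; choosing $c=(\overline{\sigma}+\epsilon)a_n$ and $\eta=\epsilon a_n/2$ gives $\mathbb{V}(S_n>(\overline{\sigma}+\epsilon)a_n)\leq\hat{\mathbb{V}}(\hat S_n>(\overline{\sigma}+\epsilon/2)a_n)$.

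The hardest step is upgrading these single-$n$ tail comparisons to the $\limsup$ event, since $\mathbb{V}(\limsup_n A_n)$ depends on the joint law of $\{S_n\}$ whereas \eqref{eqc} only matches one-dimensional marginals. I would handle this via a Chen-Hu style dyadic blocking. Along a geometric subsequence $n_k=\lfloor\rho^k\rfloor$ with $\rho>1$ close to $1$, the marginal tails $\hat{\mathbb{V}}(\hat S_{n_k}>(\overline{\sigma}+\epsilon/4)a_{n_k})$ decay like $(\log n_k)^{-\alpha}$ with $\alpha>1$, hence are summable in $k$; by the previous paragraph so are $\mathbb{V}(S_{n_k}>(\overline{\sigma}+\epsilon/2)a_{n_k})$, and Lemma \ref{lm:1} then gives $\mathbb{V}(\limsup_k\{S_{n_k}>(\overline{\sigma}+\epsilon/2)a_{n_k}\})=0$. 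The intervals $n_{k-1}<n\leq n_k$ are filled by applying the same transfer to each block $\sum_{i=n_{k-1}+1}^{n_{k-1}+j}X_i$, whose single-sum distribution equals that of its Peng-independent counterpart by \eqref{eqc}, together with the Peng-independent maximal inequality, at the cost of a further small loss in $\epsilon$. Letting $\rho\downarrow 1$ and $\epsilon\downarrow 0$ by continuity from below of $\mathbb{V}$ completes the upper estimate, and the lower one follows by the $X_i\mapsto -X_i$ symmetry.
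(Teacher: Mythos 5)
Your overall architecture is the same as the paper's: pass to the Peng-independent copies on the product space, transfer marginal tail bounds along a subsequence via a Lipschitz sandwich of the indicator (your inequality $\mathbb{V}(S_n>c)\leq\mathbb{E}[g_\eta(S_n)]=\hat{\mathbb{E}}[g_\eta(\hat S_n)]\leq\hat{\mathbb{V}}(\hat S_n>c-\eta)$ is exactly the paper's step with the function $\phi$ of Eq.~\eqref{eq:10}), apply the Borel--Cantelli lemma, interpolate between blocks, and finish by the symmetry $X_i\mapsto -X_i$. You also correctly identify the crux: the comparison theorem only matches one-dimensional marginals of the partial-sum process, so nothing involving the joint law of $\{S_n\}$ can be transferred directly. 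However, there is a genuine gap precisely at that crux, in your interpolation step. To control $M_k=\max_{n_k\leq n<n_{k+1}}|S_n-S_{n_k}|/a_{n_k}$ you invoke ``the Peng-independent maximal inequality'' together with the transfer of each single block sum. But a maximal inequality proved on the independent side bounds a functional of the whole trajectory $(\hat S_{m,1},\dots,\hat S_{m,N})$, and $\max_{j\leq N}|\hat S_{m,j}|$ is not of the form $\varphi(\hat S_{m,N})$ for any single $N$; Eq.~\eqref{eqc} therefore gives you no way to bring the conclusion back to the convolutionary side. The correct order of operations (and the paper's, via Lemma~\ref{lm:4}) is: first transfer the \emph{individual} moments $\mathbb{E}[|S_{m,j}|^r]=\hat{\mathbb{E}}[|\hat S_{m,j}|^r]\leq C_r j^{r/2}$ for every $m$ and $j$ (using Hu's moment bound for i.i.d.\ sequences on the independent side), and then apply a maximal inequality of Stout--M\'oricz type (Stout, Theorem~3.7.5), which upgrades such uniform sub-block moment bounds to $\mathbb{E}[\max_{j\leq N}|S_{m,j}|^r]\leq K_r N^{r/2}$ \emph{without any independence assumption}, hence directly on the convolutionary side. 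Chebyshev and Borel--Cantelli then give $v(\limsup_k M_k=0)=1$.

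A secondary issue: you propose to invoke Theorem~\ref{CHLIN} as a black box and then assert that the marginal tails $\hat{\mathbb{V}}(\hat S_{n_k}>(\overline{\sigma}+\epsilon/4)a_{n_k})$ decay like $(\log n_k)^{-\alpha}$ with $\alpha>1$. That quantitative decay does not follow from the qualitative capacity statement of Theorem~\ref{CHLIN}; it is an intermediate estimate inside Chen and Hu's proof (obtained there by truncation, exponential/Chebyshev bounds and the central limit theorem), and a plain second-moment Chebyshev bound only yields $(\log\log n)^{-1}$, which is nowhere near summable. The paper handles this by citing the intermediate summability estimate (its Eq.~\eqref{eq:9}) rather than the theorem's conclusion, and you need to do the same. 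Your choice of a geometric subsequence $n_k=\lfloor\rho^k\rfloor$ with $\rho\downarrow 1$ at the end is a legitimate variant of the paper's $n_k=[e^{k^{\alpha}}]$ (whose ratio tends to $1$ automatically) and is not itself a problem.
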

To prove Theorem \ref{tm:3}, we first establish the following lemma for convolutionary random variables.
\begin{lem}\label{lm:4}
Under assumptions of Theorem \ref{tm:3}, denoting $S_{m,n}=\sum\limits_{i=m+1}^{m+n}X_i$, then for any $r>2$, there exists a positive constant $K_r$ not depending on $n$ such that
\[\mathbb{E}[\max\limits_{i\leq n}|S_{m,i}|^r]\leq K_rn^{\frac{r}{2}},\quad \forall\,\, m\in\mathbb{N}.\]
\end{lem}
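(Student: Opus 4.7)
The plan is to combine the comparison theorem (Theorem \ref{tm:1}) with a Rosenthal-type moment inequality available in the Peng independent setting, using a truncation at level $\sqrt{n}$ to compensate for the fact that only a second moment is assumed.

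First I would invoke Theorem \ref{tm:1} to construct a Peng independent copy $\{\hat{X}_i\}_{i=1}^{\infty}$ with $\hat{X}_i\overset{d}{=}X_i$. An approximation argument (cut off $|x|^r$ by bounded Lipschitz functions $|x|^r\wedge N$ and let $N\uparrow\infty$) extends the comparison from $C_{b,Lip}$ test functions to the identity $\mathbb{E}[|S_{m,n}|^r]=\hat{\mathbb{E}}[|\hat{S}_{m,n}|^r]$ whenever the right-hand side is finite. Finiteness is secured by truncating at level $\sqrt{n}$: writing $X_i=Y_i+Z_i$ with $|Y_i|\leq 2\sqrt{n}$ via a smooth cut-off, the bounded part has $r$-th moment controlled by $(2\sqrt{n})^{r-2}\,\overline{\sigma}^{2}$, while the tail part satisfies $|Z_i|\leq X_i^{2}/\sqrt{n}$ on its support and so contributes only at the same order. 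Feeding the bounded part into a Rosenthal-type inequality for Peng i.i.d.\ mean-zero sums in the sublinear framework (of the kind used by Chen and Hu \cite{CZHF} in their proof of Theorem \ref{CHLIN}) yields a bound of order $n^{r/2}$ for each individual partial sum.

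The crucial bridge is passing from moments of single partial sums to $\max_{i\leq n}|S_{m,i}|^r$, since Theorem \ref{tm:1} equates expectations of functions of one partial sum at a time. I would handle this by a dyadic chaining: writing $i$ in binary, decompose $S_{m,i}$ as a sum of at most $\lceil\log_{2}n\rceil$ blocks of the form $\sum_{j=a+1}^{a+2^{k}}X_{j}$, each itself a partial sum of the convolutionary sequence starting from some offset $a$, so that the single-sum bound from the previous step applies to every block. A H\"older-type inequality $(\sum_{k}u_{k})^{r}\leq(\log n)^{r-1}\sum_{k}u_{k}^{r}$ together with a union bound over the blocks of each fixed length reduces $\mathbb{E}[\max_{i\leq n}|S_{m,i}|^{r}]$ to a geometric sum in $k$ of single-block moment bounds, yielding the required $n^{r/2}$ order.

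The main obstacle I expect is the logarithmic loss intrinsic to the chaining argument; recovering the clean bound $K_{r}n^{r/2}$ as stated demands a Doob- or Kolmogorov-type maximal inequality directly in the convolutionary sublinear setting. I would establish this by adapting the classical stopping-time argument: define level-crossing events via the indicator functions available in $\mathcal{H}$ under the assumptions in force for Theorem \ref{tm:3}, and exploit the convolution property as a weak substitute for the martingale difference structure to cancel cross terms in the expansion of $S_{m,n}^{2}$ restricted to these level sets. This step is the technically most delicate ingredient.
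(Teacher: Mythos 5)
Your first step matches the paper: pass to the Peng independent copy via Theorem \ref{tm:1}, extend the comparison identity from $C_{b,Lip}$ to $\varphi(x)=|x|^r$, and bound the single-block moment $\hat{\mathbb{E}}[|\hat S_{m,n}|^r]$ by $C_r n^{\frac{r}{2}}$ uniformly in $m$. The paper obtains that single-block bound by citing Theorem 3.1 of Hu \cite{HF} (a ready-made Rosenthal/Marcinkiewicz--Zygmund-type moment bound for Peng i.i.d.\ sums) rather than by your truncation-at-$\sqrt{n}$ construction, but this part of your plan is in the right spirit, even if the control of the truncated tail $\sum_i Z_i$ under a sublinear expectation would need more care than you give it.

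The genuine gap is in the passage from single partial sums to $\max_{i\leq n}|S_{m,i}|^r$. Your dyadic chaining, as you concede, loses a factor $(\log n)^{r-1}$, and the repair you propose --- a Doob/Kolmogorov stopping-time argument using the convolution property to cancel cross terms over level-crossing events --- is precisely the step that does not go through: sub-additivity of $\mathbb{E}$ points the wrong way for disjointifying first-crossing events, the convolution hypothesis only constrains functionals of the sum $S_{m,n}+X_{m+n+1}$ and gives no handle on products with indicators of level sets, and no optional-stopping identity is available. The idea you are missing is that no probabilistic structure is needed at this stage at all. The paper invokes the proof of Theorem 3.7.5 of Stout \cite{SW}, a M\'oricz--Serfling-type \emph{deterministic} maximal inequality: if every block satisfies $\mathbb{E}[|S_{m,i}|^r]\leq C_r\, i^{\frac{r}{2}}$ uniformly in the starting index $m$ (this is exactly why the lemma carries the quantifier over all $m$, and why Theorem \ref{tm:1} is stated for shifted blocks $\sum_{i=m+1}^{m+n}X_i$), and the bounding function $n^{\frac{r}{2}}$ with $r/2>1$ is superadditive, then a bisection recursion --- splitting $\max_{i\leq n}|S_{m,i}|$ at $h=\lceil n/2\rceil$ and applying the Minkowski-type inequality for $\mathbb{E}[|\cdot|^r]^{1/r}$, which holds for sublinear expectations --- yields $\mathbb{E}[\max_{i\leq n}|S_{m,i}|^r]\leq K_r n^{\frac{r}{2}}$ with no logarithmic loss, using only monotonicity and sub-additivity of $\mathbb{E}$. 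Replacing your chaining-plus-stopping-time plan by this recursion is what closes the argument.
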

\begin{proof}
Consider the corresponding independent sequence $\{\hat X_i\}_{i=1}^{\infty}$ on $(\hat{\Omega},\hat{\mathcal{H}},\hat{\mathbb{E}})$. Notice that Theorem \ref{tm:1} holds for all $\varphi \in C_{l,Lip}(\mathbb{R})$ under the additional assumptions of $\mathcal{H}$. Thus Applying Theorem \ref{tm:1}, we obtain
\[\sup\limits_{m \in\mathbb{N}}\mathbb{E}[|S_{m,n}|^r]=\sup\limits_{m \in\mathbb{N}}\hat{\mathbb{E}}[|\hat S_{m,n}|^r]\leq C_rn^{\frac{r}{2}},\]
where the last inequality follows from Theorem 3.1 in  Hu \cite{HF}.
Then according to the proof of Theorem 3.7.5 in Stout \cite{SW}, we obtain
\[\mathbb{E}[\max\limits_{i\leq n}|S_{m,i}|^r]\leq K_rn^{\frac{r}{2}},\quad \forall\,\, m\in\mathbb{N}.\]
\end{proof}
\begin{proof}[Proof of Theorem \ref{tm:3}]
Consider the corresponding independent sequence $\{\hat X_i\}_{i=1}^{\infty}$ on $(\hat{\Omega},\hat{\mathcal{H}},\hat{\mathbb{E}})$ to the convolutionary sequence $\{X_i\}_{i=1}^{\infty}$.
For independent $\{\hat X_i\}_{i=1}^{\infty}$, applying the Chebyshev's inequality and central limit theorem, Chen and Hu \cite{CZHF} derive that for any $\epsilon>0$, there is
\begin{equation}\label{eq:9}
\sum\limits_{n_k\geq m_0}\hat{\mathbb{V}}\left(\frac{\hat S_{n_k}}{a_{n_k}}>(1+\epsilon)\overline\sigma\right)<\infty,
\end{equation}
where $m_0$ is a sufficiently large integer and $n_k:=[e^{k^{\alpha}}]$ with some chosen $0<\alpha<1$ independent of $\epsilon$.\\
Then for any given $\epsilon>0$, we define a function $\phi\in C_{b,Lip}(\mathbb{R})$ by
\begin{equation}\label{eq:10}
\phi(x)=\begin{cases}
0            & x\leq(1+\frac{\epsilon}{2})\overline\sigma;       \\
\frac{2}{\epsilon\overline\sigma}(x-(1+\frac{\epsilon}{2})\overline\sigma)     & (1+\frac{\epsilon}{2})\overline\sigma<x\leq(1+\epsilon)\overline\sigma;       \\
1                & x>(1+\epsilon)\overline\sigma.
\end{cases}
\end{equation}
It's easy to verify that $I_{\{x>(1+\epsilon)\overline\sigma\}}<\phi(x)<I_{\{x>(1+\frac{\epsilon}{2})\overline\sigma\}}$.
Therefore we obtain that
\begin{align*}
\mathbb{V}\left(\frac{S_{n_k}}{a_{n_k}}>(1+\epsilon)\overline\sigma\right) & =\mathbb{E}\left[I\left\{\frac{S_{n_k}}{a_{n_k}}>(1+\epsilon)\overline\sigma\right\}\right]\leq\mathbb{E}\left[\phi\left(\frac{S_{n_k}}{a_{n_k}}\right)\right] =\hat{\mathbb{E}}\left[\phi\left(\frac{\hat S_{n_k}}{a_{n_k}}\right)\right]\\
&\leq\hat{\mathbb{E}}\left[I\left\{\frac{\hat S_{n_k}}{a_{n_k}}>\left(1+\frac{\epsilon}{2}\right)\overline\sigma\right\}\right]
=\hat{\mathbb{V}}\left(\frac{\hat S_{n_k}}{a_{n_k}}>\left(1+\frac{\epsilon}{2}\right)\overline\sigma\right)
\end{align*}
Since Eq. \eqref{eq:9} holds for arbitrary $\epsilon>0$, we have
\[\sum\limits_{n_k\geq m_0}\mathbb{V}\left(\frac{S_{n_k}}{a_{n_k}}>(1+\epsilon)\overline\sigma\right)\leq\sum\limits_{n_k\geq m_0}\hat{\mathbb{V}}\left(\frac{\hat S_{n_k}}{a_{n_k}}>\left(1+\frac{\epsilon}{2}\right)\overline\sigma\right)<\infty\]
Consequently, applying Lemma \ref{lm:1}, we can obtain $\mathbb{V}(\bigcap\limits_{m=1}^{\infty}\bigcup\limits_{k=m}^{\infty}\{\frac{S_{n_k}}{a_{n_k}}>(1+\epsilon)\overline\sigma\})=0$, which implies
\[v\left(\limsup\limits_{k\rightarrow\infty}\frac{S_{n_k}}{a_{n_k}}\leq(1+\epsilon)\overline\sigma\right)=1.\]
Then we define $M_k:=\max\limits_{n_k\leq n< n_{k+1}}\frac{|S_n-S_{n_k}|}{a_{n_k}}$ for $k\in \mathbb{N}^{+}$. For each $k\in \mathbb{N}^{+}$ and $n_k\leq n <n_{k+1}$, there is
\[\frac{S_{n}}{a_{n}}\leq \frac{S_{n_k}}{a_{n_k}}\frac{a_{n_k}}{a_n}+M_k\frac{a_{n_k}}{a_n}.\]
By choosing $p>2$ such that $p(1-\alpha)\geq 2$, according to Lemma \ref{lm:4}, we get
\[\sum\limits_{k=1}^{\infty}\mathbb{E}[M_k^p]\leq K_p\sum\limits_{k=1}^{\infty}\frac{(n_{k+1}-n_k)^{\frac{p}{2}}}{a_{n_k}^{p}}\leq C_p \sum\limits_{k=1}^{\infty}k^{-\frac{p(1-\alpha)}{2}}(\log k)^{-\frac{p}{2}}<\infty,\]
where $C_p$ is a positive constant. By Chebyshev's inequality, we achieve that for any $\epsilon>0$, there is
\[\sum\limits_{k=1}^{\infty}\mathbb{V}(M_k>\epsilon)\leq\sum\limits_{k=1}^{\infty}\frac{\mathbb{E}[M_k^p]}{\epsilon^p}<\infty.\]
Then, according to the Lemma \ref{lm:1}, we have
$v(\limsup\limits_{k\rightarrow\infty}M_k\leq\epsilon )=1$.\\
Thus by the arbitrariness of $\epsilon$ and the upper continuity of the lower capacity $v$, we get
\[v(\limsup\limits_{k\rightarrow\infty}M_k=0 )=1.\]
Combined with the fact that $\lim\limits_{k\rightarrow\infty}\frac{a_{n_k}}{a_{n_{k+1}}}=1$, we obtain that for arbitrary $\epsilon$
\[v\left(\limsup\limits_{k\rightarrow\infty}\frac{S_{n}}{a_{n}}\leq(1+\epsilon)\overline\sigma\right)\geq v\left(\left\{\limsup\limits_{k\rightarrow\infty}\frac{S_{n_k}}{a_{n_k}}\leq(1+\epsilon)\overline\sigma\right\}\cap\left\{\limsup\limits_{k\rightarrow\infty}M_k=0 \right\}\right)=1,\]
implying that
\begin{equation}\label{eq:11}
v\left(\limsup\limits_{k\rightarrow\infty}\frac{S_{n}}{a_{n}}\leq\overline\sigma\right)=1.
\end{equation}
Finally, by considering the sequence $\{-X_i\}_{i=1}^{\infty}$, we get
\begin{equation}\label{eq:12}
v\left(\limsup\limits_{k\rightarrow\infty}\frac{-S_{n}}{a_{n}}\leq\overline\sigma\right)=v\left(\liminf\limits_{k\rightarrow\infty}\frac{S_{n}}{a_{n}}\geq -\overline\sigma\right)=1.
\end{equation}
Combining Eq. \eqref{eq:11} with Eq. \eqref{eq:12}, we obtain the desired result
\[v\left(-\overline\sigma\leq\liminf\limits_{k\rightarrow\infty}\frac{S_{n}}{a_{n}}\leq\limsup\limits_{k\rightarrow\infty}\frac{S_{n}}{a_{n}}\leq\bar\sigma\right)=1.\]
\end{proof}
\section*{References}

\end{document}